\newtheorem{theorem}{Theorem}[section]
\newtheorem{lemma}[theorem]{Lemma}
\newcommand{\ii}{\mathrm{i}}
\theoremstyle{definition}
\newtheorem{prop}[theorem]{Proposition}
\newtheorem{cor}[theorem]{Corollary}
\newtheorem{rem}[theorem]{Remark}
\newtheorem{open}[theorem]{Open Question}
\theoremstyle{remark}
\newtheorem{remark}[theorem]{Remark}
\numberwithin{equation}{section}
\begin{document}

\title[Szeg\H{o} and Bergman Projections on Planar Domains]{Some Results for the Szeg\H{o} and Bergman Projections on Planar Domains}


\author{Nathan A. Wagner}
\address{Nathan A. Wagner \hfill\break\indent 
 Department of Mathematics \hfill\break\indent 
 Brown University \hfill\break\indent 
 151 Thayer Street\hfill\break\indent 
 Providence, RI, 02912 USA}
\email{nathan\_wagner@brown.edu}
\thanks{The research that led to this manuscript was partially completed when the author was supported by an NSF GRF grant (DGE-1745038).}

\subjclass[2010]{Primary: 43A99, Secondary: 30E20, 30H10, 30H20, 30C20 }

\date{today}

\begin{abstract}
The purpose of this note is to prove some boundedness/compactness results of a harmonic analysis flavor for the Bergman and Szeg\H{o} projections on certain classes of planar domains using conformal mappings. In particular, we prove weighted estimates for the projections, provide quantitative $L^p$ estimates and a specific example of such estimates on a domain with a sharp $p$ range, and show that the ``difference'' of the Bergman and Szeg\H{o} projections is compact at the endpoints $p=1, \infty$ for domains with sufficient smoothness. We also pose some open questions that naturally arise from our investigation. 
\end{abstract}

\maketitle

\section{Introduction}

Let $\mathbb{D}$ denote the unit disc and $\partial \mathbb{D}$ the unit circle. Let $A^2(\mathbb{D})$ denote the Bergman space of square-integrable holomorphic functions on the unit disc and $H^2(\partial \mathbb{D})$ denote the Hardy space on the unit circle, which is the closed subspace of $L^2(\partial \mathbb{D})$ consisting of functions whose Fourier series are of the form $f(e^{\ii \theta})= \sum_{n=0}^{\infty} a_n e^{\ii n \theta}.$ It is well-known that every function in $H^2(\partial \mathbb{D})$ can be identified with a holomorphic function in $\mathbb{D}$ with square-summable Taylor coefficients (equivalently, belonging to the space $H^2(\mathbb{D})$) via the Poisson extension, and conversely, the boundary values of any such holomorphic function are well-defined as non-tangential limits almost everywhere on $\partial \mathbb{D}$ and belong to $H^2(\partial \mathbb{D}).$ The Bergman projection $\mathcal{B}_0$ is the orthogonal projection from $L^2(\mathbb{D})$ to $A^2(\mathbb{D})$, and can be expressed as the following integral for $f \in L^2(\mathbb{D})$ and $z \in \mathbb{D}$:

$$ \mathcal{B}_0f(z)= \int_{\mathbb{D}} \frac{f(w)}{(1-\overline{w}z)^2}        \, dA(w),$$
where $dA$ denotes (normalized) Lebesgue area measure on $\mathbb{D}.$ On the other hand, the Szeg\H{o} projection $\mathcal{S}_0$ is the orthogonal projection from  $L^2( \partial \mathbb{D})$ to $H^2( \partial \mathbb{D})$, and has the following integral representation for $f \in L^2(\partial \mathbb{D})$ and $\zeta \in \partial \mathbb{D}$:

$$ \mathcal{S}_0f(\zeta)= \lim_{r \rightarrow 1^{-}} \int_{0}^{2 \pi} \frac{f(e^{\mathrm{i} \theta})}{(1-e^{-\mathrm{i} \theta} r \zeta)}        \, d\tilde{\theta},\quad d\tilde{\theta}=\frac{d \theta}{2\pi}.$$

Conformal mapping then allows us to express the Bergman and Szeg\H{o} projections on other planar domains in terms of the projections on the unit disc/circle. Let $\Omega \subsetneq \mathbb{C}$ be a simply connected domain. When we consider the Szeg\H{o} projection, we will make the additional assumption that $\Omega$ is bounded and has rectifiable boundary. For such a domain, we know by the Riemann mapping theorem that there exists a biholomorphic map $\varphi: \mathbb{D} \rightarrow \Omega.$ In particular, if $\Omega$ is also assumed to be bounded and rectifiable, we know that $\varphi$ extends to a continuous bijective mapping of the closures $\bar{\mathbb{D}} \rightarrow \bar{\Omega}$ and $\varphi'$ has non-tangential boundary limits almost everywhere. Moreover, if we identify $\varphi'$ with its boundary values, it is known $\varphi' \in H^1(\partial \mathbb{D})$ (see \cite{LS1}). When we refer to $\varphi'$ in the case of the Szeg\H{o} projection, we will make this identification without further comment.

  The Bergman projection $\mathcal{B}$ is again defined to be the orthogonal projection from $L^2(\Omega)$ to $A^2(\Omega)$, where $A^2(\Omega)$ is the Bergman space of square-integrable functions on $\Omega.$ Having fixed a conformal map $\varphi,$ define a map $\tau$ from $L^2(\Omega)$ to $L^2(\mathbb{D})$ as follows:  $$ \tau(f):=(f \circ \varphi )\cdot (\varphi')$$  
and notice that $\tau$ is a surjective isometry. Then, using a transformation law for the Bergman kernel, one can show the Bergman projection $\mathcal{B}$ can be expressed \begin{equation} \mathcal{B}=\tau^{-1} \mathcal{B}_0 \tau. \label{Bergman relation} \end{equation}

 Similarly, the Szeg\H{o} projection $\mathcal{S}$ can be defined as the orthogonal projection from $L^2(\partial \Omega)$ to $H^2(\partial \Omega)$, where $H^2(\partial \Omega)$ is an appropriate notion of Hardy space. This definition can be made more explicit by conformal mapping: analogously define a surjective isometry $\tau_{1/2}$ from $L^2(\partial \Omega)$ to $L^2(\partial \mathbb{D})$ as follows:

 $$\tau_{1/2}(f)(e^{\ii \theta}):= f(\varphi(e^{\ii \theta})) \cdot (\varphi'(e^{\ii \theta}))^{1/2}.$$
We remark that as a consequence of the fact that $\varphi'$ is non-vanishing on $\mathbb{D}$, there exists a holomorphic function $\phi$ on $\mathbb{D}$ such that $\phi^2=\varphi'$. Moreover, $\phi$ has non-tangential boundary limits almost everywhere (in fact, $\phi \in H^2(\mathbb{D})$) by virtue of the fact that $\varphi' \in H^1(\mathbb{D})$. Therefore, we may take $(\varphi'(e^{\ii \theta}))^{1/2}=\phi(e^{\ii \theta})$ for almost every $\theta \in [0,2 \pi)$, and this consistently defines a $L^2(\partial \mathbb{D})$ function. Then the Szeg\H{o} projection on $L^2(\partial \Omega)$ may be defined 
\begin{equation} \mathcal{S}= \tau_{1/2}^{-1} \mathcal{S}_0 \tau_{1/2}. \label{Szego relation} \end{equation}

While the $L^2$ regularity of both projection operators is immediate from their definitions, the $L^p$ regularity is highly dependent on the boundary geometry of the domain. The technique of relating the projections to operators on the disk via conformal mapping was used by Lanzani and Stein to study the $L^p$ regularity of the Bergman and Szeg\H{o} projections in \cite{LS1} on several classes of planar domains including graph domains, local graph domains, domains with the vanishing chord-arc property, and Lipschitz domains. In some of these cases (local graph, graph, and Lipschitz) they show that the Szeg\H{o}/Bergman projections are bounded on $L^p$ for a limited range $p_0<p<p_0'=\frac{p_0}{p_0-1}$ where $p_0>1$, and they additionally demonstrate that the range cannot be improved in the Lipschitz case for the Szeg\H{o} projection. This behavior of the projections stands in stark contrast to the smooth boundary case, or even the case when the boundary is of class $C^1$ or convex. In these situations, the $L^p$ boundedness for $1<p<\infty$ can be recovered (see, for example \cite[Theorem 2.1] {Bek1}, \cite{S,S2}). However, in the case that $\Omega$ has the vanishing chord-arc property, Lanzani and Stein proved that the projections are bounded on $L^p$ for the full range as well, which strengthens the $C^1$ result. 

  It turns out that the \emph{unweighted} $L^p$ regularity of either projection on $\Omega$ or $\partial \Omega$ is equivalent to the boundedness on a \emph{weighted} $L^p$ space of the corresponding projection on the unit disc/circle. The boundedness on the unit disc/circle can then be deduced from known weighted theory; in particular, the Szeg\H{o} projection is bounded on $L^p_\omega(\partial \mathbb{D})$ if and only if $\omega \in A_p(\partial \mathbb{D})$ (see \cite{Nik}), and the Bergman projection is bounded on $L^p_\omega(\mathbb{D})$ if and only if $\omega \in A_p^{+}(\mathbb{D})$ (see \cite{Bek1}).
These weight classes are defined precisely in Section \ref{Weighted}. We also mention that this conformal mapping technique was used by Lanzani and Koenig \cite{LK} to study mapping properties of the difference operator $\mathcal{B}\mathcal{P}-\mathcal{P} \mathcal{S},$ where $\mathcal{P}$ denotes the Poisson extension operator. 

In this note, we prove a collection of results, including weighted estimates and compactness, for the Bergman and Szeg\H{o} projections and related operators on planar domains. The unifying principle in our exploration is the use of conformal mapping techniques, in particular equations \eqref{Bergman relation} and \eqref{Szego relation} and related identities to prove these harmonic analysis theorems. First, we provide sufficient conditions on a weight $\omega$ for the Bergman or Szeg\H{o} projection to be bounded on $L^p_\omega$ for a general planar domain. In specific cases, these conditions translate to membership in an expected weight class that is the intersection of a Muckenhoupt class with a reverse H\"{o}lder class. We also use conformal mapping to provide a quantitative upper estimate on the operator norms of the projections on unweighted $L^p$ spaces and work out a specific example where we also provide a lower bound. Finally, we study the compactness of the difference operator $\mathcal{B}\mathcal{P}-\mathcal{P}\mathcal{S}.$ Throughout the paper, we list several open questions as well.

This paper is organized as follows. In Section \ref{Weighted}, we prove the weighted $L^p$ estimates for the projection operators (Theorems \ref{SzegoMain} and \ref{BergmanMain}) and note several applications to specific domain classes (Corollaries \ref{SzegoGraph}, \ref{SzegoLip}, \ref{BergmanGraph}, and \ref{BergmanLip}). In Section \ref{Quant}, we show how the conformal techniques lead to quantitative (unweighted) $L^p$ norm estimates for the projections and work out an example where we can obtain concrete upper and lower bounds (Theorem \ref{SzegoSpecificQuant}). Finally, in Section \ref{B vs S}, we prove endpoint compactness results for the difference operator $\mathcal{B} \mathcal{P}-\mathcal{P}\mathcal{S}$ (Theorem \ref{CompactDiffInf}).

The author would like to thank Brett Wick, Cody Stockdale, and Walton Green for several helpful comments. He would also like to express his gratitude to the anonymous referee, whose suggestions led to many improvements in the paper.

\section{Weighted Estimates}\label{Weighted}

In this section, we prove general weighted estimates for the Bergman and Szeg\H{o} projections. As a consequence, we deduce specific results on certain classes of planar domains. To summarize, we establish that for these domain classes, if the Szeg\H{o} or Bergman projection is bounded on $L^p$ for a limited range of $p$, then the operator extends boundedly in the same range to the weighted space $L^p_{\omega}(\Omega),$ provided the pullback of the weight $\omega$ to the unit disc belongs to an appropriate Muckenhoupt class intersected with a reverse H\"{o}lder class. In particular, see Subsection \ref{Apps}.

Weighted theory for operators that are bounded on a limited range of $L^p$ spaces has been studied by several authors. Auscher and Martell proved that for a certain class of operators $\mathcal{T}$ that are bounded on a limited range $p_0<p<q_0$ of $L^p$ spaces, weighted $L^p$ estimates can be obtained \cite{AM}. In particular, if $\omega \in A_{\frac{p}{p_0}} \cap \text{RH}_{\left(\frac{q_0}{p}\right)'}$, then $\mathcal{T}$ maps $L^p(\omega)$ to $L^p(\omega)$. Bernicot, Frey, and Petermichel later elaborated on this result to provide a sharp weighted estimate using techniques of sparse bounds \cite{BFP}. The weight classes we consider below are inspired by these results.

Recall that a weight is a locally integrable function that is positive almost everywhere. For $1<p<\infty$, we say a weight $\omega$ on $\partial \mathbb{D}$ belongs to $A_p(\partial \mathbb{D})$ if the following quantity is finite:
$$[\omega]_{A_p}:= \sup_{I} \frac{1}{|I|}\int_{I} \omega \, d \tilde{\theta} \left(\frac{1}{|I|}\int_{I} \omega^{-\frac{1}{p-1}}\, d \tilde{\theta}             \right)^{p-1},$$
where $I$ is an arc on the boundary of $\mathbb{D,}$ $d \tilde{\theta}$ denotes normalized arc length measure, and $|I|= \int_{I} \, d \tilde{\theta}$ denotes the (normalized) arc length of $I$. On the other hand, for $1<q<\infty$ we say that $\omega$ lies in the reverse H\"{o}lder class $\text{RH}_q(\partial \mathbb{D})$ if

$$[\omega]_{\text{RH}_q(\partial \mathbb{D})}:= \sup_{I} \frac{\left(\frac{1}{|I|} \int_{I} \omega^q \, d \tilde{\theta} \right)^{1/q}}{\frac{1}{|I|} \int_{I} \omega\, d \tilde{\theta}}< \infty.$$
Obviously, the normalization is not important in the definition of these weight classes; it is simply for convenience. 

Analogously for the Bergman projection, we define the $A_p^{+}$ class of weights with respect to certain Carleson regions. For $z_0 \in \mathbb{C}$, let $D_{r}(z_0)=\{z:|z-z_0|<r\}$  be the (Euclidean) disc of radius $r$ centered at $z_0$. We say a weight $\omega$ on $\mathbb{D}$ belongs to $A_p^{+}(\mathbb{D})$ if the following quantity is finite:

$$[\omega]_{A_p^{+}}:= \sup_{\substack{ z \in \partial \mathbb{D} \\ r>0}} \frac{1}{|D_r(z) \cap \mathbb{D}|}\int_{D_r(z)\cap \mathbb{D}} \omega \, dA \left(\frac{1}{|D_r(z) \cap \mathbb{D}|}\int_{D_r(z) \cap \mathbb{D}} \omega^{-\frac{1}{p-1}} \, dA \right)^{p-1},$$ where $dA$ denotes (normalized) planar Lebesgue measure and $|D_r(z)|=A(D_r(z)).$

Analogous to our earlier definition, for $1<q<\infty$ we say that $\omega \in \text{RH}_q(\mathbb{D})$ if 

$$[\omega]_{\text{RH}_q(\mathbb{D})}:= \sup_{\substack{ z \in \partial \mathbb{D} \\ r>0}} \frac{\left(\frac{1}{|D_r(z) \cap \mathbb{D}|} \int_{D_r(z) \cap \mathbb{D}} \omega^q \, dA \right)^{1/q}}{\frac{1}{|D_r(z) \cap \mathbb{D}|} \int_{D_r(z) \cap \mathbb{D}} \omega \, dA}< \infty.$$
We remark in passing that a subclass of $A_p^{+}$ weights (more precisely, a limiting class when $p=\infty$ that satisfy an additional regularity condition) that belong to $\text{RH}_q(\mathbb{D})$ were the main object of study in \cite{APR}.

We have the following two general theorems. 

\begin{theorem}\label{SzegoMain}
Suppose $\Omega \subset \mathbb{C}$ is a bounded, simply connected domain with rectifiable boundary and $1<p_0<2$. Suppose there exists a biholomorphic mapping $\varphi:\bar{\mathbb{D}} \rightarrow \bar{\Omega}$ that has the property that $|\varphi'|^{(1-\frac{p}{2})\left(\frac{p_0}{p(p_0-1)}\right)} \in A_2(\partial \mathbb{D})$ for $p_0<p<\frac{p_0}{p_0 -1}.$ If $p_0<p<\frac{p_0}{p_0-1}$ and $\omega \circ \varphi \in A_{\frac{p}{p_0}}(\partial\mathbb{D}) \cap \emph{RH}_{\left(\frac{p_0'}{p}\right)'}(\partial \mathbb{D})$, then the Szeg\H{o} projection $\mathcal{S}$ extends to a bounded operator on $L^p_\omega(\partial \Omega)$. Moreover,

$$\|\mathcal{S}\|_{L^p_\omega(\partial \Omega) \rightarrow L^p_\omega(\partial \Omega)} \lesssim C_p \left([\omega \circ \varphi]_{A_{\frac{p}{p_0}}} \times [\omega \circ \varphi]_{\emph{RH}_{\left(\frac{p_0'}{p}\right)'}} \times \left[|\varphi'|^{(1-\frac{p}{2})\left(\frac{p_0}{p(p_0-1)}\right)}\right]^{\frac{p(p_0-1)}{p_0}}_{A_2}\right)^{\max\{1, \frac{1}{p-1}\}},$$
where $C_p= \max\{p, \frac{1}{p-1}\}$ and the implicit constant is independent of $p$ and $\Omega.$

\end{theorem}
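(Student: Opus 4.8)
The plan is to push everything through the conformal identity \eqref{Szego relation}: first reduce the weighted bound for $\mathcal{S}$ on $\partial\Omega$ to a weighted bound for the model operator $\mathcal{S}_0$ on $\partial\mathbb{D}$, then control the $A_p(\partial\mathbb{D})$ constant of the transported weight by a two-step H\"older argument built from the three hypothesized quantities.

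First I would carry out the change of variables. Parametrizing $\partial\Omega$ by $\theta\mapsto\varphi(e^{\ii\theta})$, so that the arc-length element is $|\varphi'(e^{\ii\theta})|\,d\theta$, and using $\mathcal{S}=\tau_{1/2}^{-1}\mathcal{S}_0\tau_{1/2}$ with $\tau_{1/2}f=(f\circ\varphi)(\varphi')^{1/2}$, a direct computation shows that $f\mapsto\tau_{1/2}f$ is, up to a harmless normalizing constant, an isometry of $L^p_\omega(\partial\Omega)$ onto $L^p_{\tilde\omega}(\partial\mathbb{D})$, where $\tilde\omega:=(\omega\circ\varphi)\,|\varphi'|^{1-p/2}$, and it intertwines $\mathcal{S}$ with $\mathcal{S}_0$ (apply the same computation to $\mathcal{S}f$, using $\mathcal{S}f\circ\varphi=(\mathcal{S}_0(\tau_{1/2}f))(\varphi')^{-1/2}$). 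Hence $\|\mathcal{S}\|_{L^p_\omega(\partial\Omega)\to L^p_\omega(\partial\Omega)}=\|\mathcal{S}_0\|_{L^p_{\tilde\omega}(\partial\mathbb{D})\to L^p_{\tilde\omega}(\partial\mathbb{D})}$. Since $\mathcal{S}_0$ is, modulo the rank-one projection onto the constants, a scalar multiple of $I+\ii H$ with $H$ the circle Hilbert transform, hence a Calder\'on--Zygmund operator, I would invoke the sharp weighted bound $\|\mathcal{S}_0\|_{L^p_v\to L^p_v}\lesssim C_p\,[v]_{A_p}^{\max\{1,1/(p-1)\}}$ with $C_p=\max\{p,1/(p-1)\}$ (Petermichl, or the $A_2$ theorem together with its sparse-domination refinement for the $p$-dependence), the constant-projection piece being trivially bounded on $L^p_v$ for $v\in A_p$. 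This reduces the theorem to the single inequality $[\tilde\omega]_{A_p}\lesssim[\omega\circ\varphi]_{A_{p/p_0}}\,[\omega\circ\varphi]_{\mathrm{RH}_{(p_0'/p)'}}\,[\,|\varphi'|^{(1-p/2)\,p_0/(p(p_0-1))}\,]_{A_2}^{p(p_0-1)/p_0}$.

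For that inequality I would set $W=\omega\circ\varphi$, $V=|\varphi'|^{1-p/2}$, and $t=(p_0'/p)'$, so that $t'=p_0'/p=p_0/(p(p_0-1))$, and note that $W\in A_{p/p_0}\cap\mathrm{RH}_t$ and $V^{t'}\in A_2$ by hypothesis, while $p/p_0$, $t$, and the exponents below all lie in $(1,\infty)$ because $1<p_0<p<p_0'$. On an arc $I$, writing $\langle h\rangle_I=\frac1{|I|}\int_I h$, I would bound the ``numerator'' average by H\"older with exponents $(t,t')$ followed by the reverse-H\"older inequality, getting $\langle WV\rangle_I\le[W]_{\mathrm{RH}_t}\langle W\rangle_I\langle V^{t'}\rangle_I^{1/t'}$, and the ``denominator'' average by H\"older with the conjugate exponents $a=\tfrac{(p-1)p_0}{p-p_0}$ and $a'=\tfrac{(p-1)p_0'}{p}$, getting $\langle(WV)^{-1/(p-1)}\rangle_I\le\langle W^{-p_0/(p-p_0)}\rangle_I^{1/a}\langle V^{-t'}\rangle_I^{1/a'}$. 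Multiplying and raising the second factor to the $(p-1)$st power, the bookkeeping identities $\tfrac{p-1}{a}=\tfrac{p}{p_0}-1$ and $\tfrac{p-1}{a'}=\tfrac{p}{p_0'}=\tfrac1{t'}$ make the $W$-terms collapse to $\langle W\rangle_I\langle W^{-1/((p/p_0)-1)}\rangle_I^{(p/p_0)-1}\le[W]_{A_{p/p_0}}$ and the $V$-terms collapse to $(\langle V^{t'}\rangle_I\langle V^{-t'}\rangle_I)^{1/t'}\le[V^{t'}]_{A_2}^{1/t'}$; taking the supremum over $I$ and using $1/t'=p(p_0-1)/p_0$ yields exactly the claimed product, and combining with the previous paragraph finishes the proof.

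The change of variables and the citation of the sharp $A_p$ bound for $\mathcal{S}_0$ are routine; the crux — and the only place genuine care is needed — is the weight computation in the last paragraph: one must check that the H\"older exponents $(t,t')$ and $(a,a')$ are the admissible choices that reconstitute precisely the three hypothesized quantities with the exact powers $1$, $1$, and $p(p_0-1)/p_0$ appearing in the statement, and that $a,a'>1$ under the standing restriction $1<p_0<p<p_0'$. It is also worth recording in the write-up why the hypothesis is an $A_2$ condition on a power of $|\varphi'|$ (rather than, say, an $A_p$ condition): this is exactly what forces the product $\langle V^{t'}\rangle_I\langle V^{-t'}\rangle_I$ to be bounded. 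Finally, one should note that $\tilde\omega$ is automatically a bona fide weight once its $A_p$ constant is shown to be finite.
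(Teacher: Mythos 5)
Your proposal is correct and follows essentially the same route as the paper: the conformal transfer via $\tau_{1/2}$ reducing matters to a weighted bound for $\mathcal{S}_0$ with weight $(\omega\circ\varphi)\,|\varphi'|^{1-p/2}$, and the two-step H\"older computation controlling its $A_p(\partial\mathbb{D})$ constant are exactly the paper's Lemma and Proposition (your exponent pairs $(t,t')$ and $(a,a')$ are the paper's $(r',r)$ and $(t,t')$). The only cosmetic difference is in justifying the sharp weighted estimate for $\mathcal{S}_0$: you decompose it as the circle Hilbert transform plus a rank-one averaging piece and cite the known sharp $A_p$ bound, whereas the paper treats $\mathcal{S}_0$ directly as a Calder\'on--Zygmund operator on a space of homogeneous type and gets the same quantitative bound from the $A_2$ theorem plus sharp extrapolation and a quantitative maximal function estimate.
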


\begin{theorem}\label{BergmanMain}
Suppose $\Omega \subsetneq \mathbb{C}$ is a simply connected domain. Suppose there exists a biholomorphic mapping $\varphi:\mathbb{D} \rightarrow \Omega$ that has the property that $|\varphi'|^{(2-p)\left(\frac{p_0}{p(p_0-1)}\right)} \in A^+_2(\mathbb{D})$ for $p_0<p<\frac{p_0}{p_0 -1}.$ If $p_0<p<\frac{p_0}{p_0-1}$ and $\omega \circ \varphi \in A^+_{\frac{p}{p_0}}(\mathbb{D}) \cap \emph{RH}_{\left(\frac{p_0'}{p}\right)'}(\mathbb{D})$, then the Bergman projection $\mathcal{B}$ is bounded on $L^p_\omega(\Omega)$. Moreover,

$$\|\mathcal{B}\|_{L^p_\omega(\Omega) \rightarrow L^p_\omega(\Omega)} \lesssim C_p  \left([\omega \circ \varphi]_{A^+_{\frac{p}{p_0}}} \times [\omega \circ \varphi]_{\emph{RH}_{\left(\frac{p_0'}{p}\right)'}} \times \left[|\varphi'|^{(2-p)\left(\frac{p_0}{p(p_0-1)}\right)}\right]^{\frac{p(p_0-1)}{p_0}}_{A_2^+}\right)^{\max\{1, \frac{1}{p-1}\}},$$
where $C_p= \max\{p, \frac{1}{p-1}\}$ and the implicit constant is independent of $p$ and $\Omega.$

\end{theorem}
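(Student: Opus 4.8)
The plan is to exploit the conformal identity \eqref{Bergman relation}, $\mathcal{B}=\tau^{-1}\mathcal{B}_0\tau$, to transfer the weighted bound for $\mathcal{B}$ on $L^p_\omega(\Omega)$ to a weighted bound for $\mathcal{B}_0$ on a suitable weighted space over $\mathbb{D}$. First I would unwind the change of variables: for $f\in L^p_\omega(\Omega)$, write $\|\mathcal{B}f\|_{L^p_\omega(\Omega)}$ and substitute $w=\varphi(z)$, $dA(w)=|\varphi'(z)|^2\,dA(z)$. Since $\tau f=(f\circ\varphi)\varphi'$ and $\tau$ intertwines the projections, one finds that $\|\mathcal{B}f\|_{L^p_\omega(\Omega)}$ equals, up to the conformal factors, $\|\mathcal{B}_0(\tau f)\|$ measured in the space $L^p$ with weight $\eta:=(\omega\circ\varphi)\,|\varphi'|^{2-p}$ on $\mathbb{D}$ (the exponent $2-p$ comes from distributing $|\varphi'|^2$ from the Jacobian against the $|\varphi'|^p$ that $\tau$ contributes when raised to the $p$-th power, and the factor $|\varphi'|^{-p}$ needed to undo $\tau$ on the output side). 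Correspondingly $\tau$ is an isometry from $L^p_\omega(\Omega)$ onto $L^p_\eta(\mathbb{D})$ with the same weight $\eta$, so the theorem reduces to the statement that $\mathcal{B}_0$ is bounded on $L^p_\eta(\mathbb{D})$ with the asserted quantitative control of the norm.

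Next I would invoke the Auscher–Martell / Bernicot–Frey–Petermichel weighted theory cited in the text. The Bergman projection $\mathcal{B}_0$ on $\mathbb{D}$ is bounded on unweighted $L^p$ precisely for $1<p<\infty$ but — more to the point for the quantitative sparse-domination machinery here — it falls into the class of operators controlled on the limited range $(p_0,p_0')$ for which $\eta\in A^+_{p/p_0}\cap\mathrm{RH}_{(p_0'/p)'}$ implies boundedness on $L^p_\eta(\mathbb{D})$, with the operator norm dominated by a power $\max\{1,\frac{1}{p-1}\}$ of the product $[\eta]_{A^+_{p/p_0}}[\eta]_{\mathrm{RH}_{(p_0'/p)'}}$ (this is the Carleson-box analogue for $A_p^+$ of the BFP estimate; the relevant sparse bound for $\mathcal{B}_0$ is standard). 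So the remaining task is purely a weight-arithmetic one: show that the hypotheses $\omega\circ\varphi\in A^+_{p/p_0}\cap\mathrm{RH}_{(p_0'/p)'}$ together with $|\varphi'|^{(2-p)(p_0/(p(p_0-1)))}\in A^+_2(\mathbb{D})$ force $\eta=(\omega\circ\varphi)|\varphi'|^{2-p}$ to lie in $A^+_{p/p_0}\cap\mathrm{RH}_{(p_0'/p)'}$, with the quoted bound on its characteristic.

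For the weight arithmetic I would use a factorization/stability argument. Write $\eta=u\cdot v$ with $u=\omega\circ\varphi$ and $v=|\varphi'|^{2-p}$. The key is that $v$ raised to the exponent $\frac{p_0}{p(p_0-1)}=\frac{1}{p}\cdot\frac{p_0}{p_0-1}$ lies in $A^+_2$; by the Jones-type factorization of $A_2$ (adapted to the Carleson-box setting), an $A^+_2$ weight is a ratio of two $A^+_1$ weights, and powers and products of such weights interact predictably with both the $A^+_q$ and $\mathrm{RH}_s$ scales. Concretely, I would check that $v^{1/(\text{appropriate exponent})}\in A^+_2$ implies $v$ itself satisfies a one-sided bound compatible with multiplication into $A^+_{p/p_0}$, and — via the duality $\mathrm{RH}_s\leftrightarrow A_t$ and Hölder's inequality across the box averages — that $v$ also multiplies $\mathrm{RH}_{(p_0'/p)'}$ back into itself; tracking the exponents gives exactly the power $\frac{p(p_0-1)}{p_0}$ on $[|\varphi'|^{(2-p)(p_0/(p(p_0-1)))}]_{A_2^+}$ in the statement. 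The factor $C_p=\max\{p,\frac{1}{p-1}\}$ and the outer exponent $\max\{1,\frac{1}{p-1}\}$ are then inherited from the quantitative weighted estimate for $\mathcal{B}_0$, and I would note that all constants are independent of $\Omega$ because the reduction to $\mathbb{D}$ absorbed every domain-dependent quantity into $\varphi'$ and $\omega\circ\varphi$.

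The main obstacle I expect is the last step: verifying that the product $(\omega\circ\varphi)|\varphi'|^{2-p}$ genuinely lands in the intersection class with the claimed characteristic bound, since $A_p^+$ and $\mathrm{RH}_q$ behave less tamely under multiplication than the classical Muckenhoupt classes, and the Carleson-box geometry (anisotropic rectangles near $\partial\mathbb{D}$) means the standard one-variable lemmas must be re-proved or carefully cited. Controlling the $\mathrm{RH}$ characteristic of a product — where reverse Hölder is not preserved by multiplication in general — will require using the specific structure that $|\varphi'|$ enters only through an $A_2^+$ power, so that its reverse-Hölder contribution is automatically finite; pinning down the sharp exponent bookkeeping there is the delicate part of the argument. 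The analogous Theorem \ref{SzegoMain} is then proved by the identical scheme with \eqref{Szego relation}, $\tau_{1/2}$, arcs in place of Carleson boxes, and the exponent $1-\frac{p}{2}$ (coming from the half-power $\varphi'{}^{1/2}$ in $\tau_{1/2}$ raised to the $p$-th power against the measure, which has no Jacobian on the boundary).
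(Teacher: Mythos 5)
Your first step---the change of variables showing that $\tau$ is an isometry of $L^p_\omega(\Omega)$ onto $L^p_\eta(\mathbb{D})$ with $\eta=(\omega\circ\varphi)|\varphi'|^{2-p}$, so that everything reduces to a weighted bound for $\mathcal{B}_0$---is exactly the paper's Lemma \ref{TransferDiscBergman}. The gap is in what you then ask of $\eta$. You propose to show that $\eta$ itself lies in the limited-range intersection class $A^+_{p/p_0}\cap\mathrm{RH}_{(p_0'/p)'}$ and to invoke a Bernicot--Frey--Petermichl-type bound for $\mathcal{B}_0$ with respect to that class. This is the wrong framework on the disc: $\mathcal{B}_0$ is a full-range operator whose weighted theory is governed by the single Bekoll\'e--Bonami class $A_p^+$ (the limited-range phenomenon lives on $\Omega$, where it is encoded entirely in the deficient factor $|\varphi'|^{2-p}$), and the quantitative input actually available (and cited in the paper, \cite{RTW}, \cite{HWW}) is the sparse-domination bound $\|\mathcal{B}_0\|_{L^p_\eta(\mathbb{D})}\lesssim C_p[\eta]_{A_p^+}^{\max\{1,1/(p-1)\}}$; there is no standard sparse/BFP estimate for $\mathcal{B}_0$ in terms of the intersection class, and you would have to prove one. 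Worse, the weight arithmetic you sketch to place $\eta$ in that intersection class relies on Jones-type factorization, $\mathrm{RH}$--$A_t$ duality, and reverse-H\"older self-improvement, none of which survive in the $A_p^+$ setting: Bekoll\'e--Bonami weights are non-doubling and in general satisfy no reverse H\"older inequality at all (this failure is precisely the subject of the cited paper \cite{APR}), and the hypothesis $|\varphi'|^{(2-p)p_0'/p}\in A_2^+$ gives no fixed reverse-H\"older exponent for $|\varphi'|^{2-p}$, let alone one uniform in the characteristic. So the step you yourself flag as delicate is not merely delicate---it is the step that fails.

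What the paper does instead is both weaker and sufficient: Proposition \ref{BergmanHolder} shows directly that the hypotheses force the \emph{product} $\eta=(\omega\circ\varphi)|\varphi'|^{2-p}$ into the plain class $A_p^+(\mathbb{D})$, with
$$[\eta]_{A_p^+}\leq[\omega\circ\varphi]_{A^+_{p/p_0}}\,[\omega\circ\varphi]_{\mathrm{RH}_{(p_0'/p)'}}\,\bigl[|\varphi'|^{(2-p)p_0'/p}\bigr]_{A_2^+}^{p/p_0'},$$
and the proof is nothing more than two applications of H\"older's inequality on each Carleson box: one with exponents $r=p_0'/p$, $r'$ (using the reverse-H\"older hypothesis on $\omega\circ\varphi$) to control the direct average, and one with exponents $t=(p-1)/(s-1)$, $t'=(p-1)r$ where $s=p/p_0$ to control the dual average. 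No factorization, self-improvement, or duality for $A_p^+$ weights is needed, which is exactly why the argument survives the non-doubling geometry. Combining this with the $[\eta]_{A_p^+}$-bound for $\mathcal{B}_0$ and Lemma \ref{TransferDiscBergman} yields the stated estimate. If you replace your intersection-class step by this direct H\"older computation and swap the BFP input for the $A_p^+$ sparse bound of \cite{RTW}/\cite{HWW}, your outline becomes the paper's proof.
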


We denote arc length measure on $\partial \Omega$ by $\, d \sigma.$ For convenience in what follows, for a locally integrable function $f$  we let
$$\fint_{I} f \, d \tilde{\theta}= \frac{1}{|I|} \int_{I} f \, d \tilde{\theta}.$$
We use analogous notation for averages in the Bergman case when the integrals are taken over a domain in $\mathbb{C}$ with respect to Lebesgue measure $dA$ in the plane. 

\subsection{Proofs of main theorems}

These theorems are established by transferring the problem to the unit disc via the following two lemmas. In what follows, $\varphi: \bar{\mathbb{D}} \rightarrow \bar{\Omega}$ always denotes the (fixed) Riemann map.

\begin{lemma} \label{transfer to disc Szego}
Let $1 \leq p<\infty$. The boundedness of $\mathcal{S}$ on $L^p_\omega(\partial \Omega)$ is equivalent to the boundedness of $\mathcal{S}_0$ on $L^p_{(\omega \circ \varphi) \nu}(\partial \mathbb{D})$ where $\nu(e^{\ii \theta})=|\varphi'(e^{\ii \theta})|^{1-\frac{p}{2}}.$ Therefore, for $1<p<\infty$, the boundedness of $\mathcal{S}$ on $L^p_\omega(\partial \Omega)$ is equivalent to the condition $(\omega \circ \varphi)\nu \in A_p(\partial \mathbb{D})$.

\begin{proof}
 To prove the first statement, it suffices to show that $\tau_{1/2}$ is also a surjective isometry from $L^p_{\omega}(\partial \Omega)$ to $L^p_{(\omega \circ \varphi)\nu}(\partial \mathbb{D})$. It is well-known (see \cite[pp. 66]{LS1}) that $d \sigma(\zeta)= |\varphi'(e^{\ii \theta})|d\theta$ where $\zeta= \varphi(e^{\ii\theta})$, so we may apply a change of variable. We compute, for a function $f \in L^p_{\omega}(\partial \Omega)$:

\begin{eqnarray*}
\int_{\partial \Omega} |f|^p \omega \mathop{d \sigma}& = & \int_{\partial \mathbb{D}} |f(\varphi)|^p (\omega \circ \varphi) |\varphi'| \mathop{d \theta} \\
& = & \int_{\partial \mathbb{D}} |f(\varphi) |^p |\varphi'|^{p/2} (\omega \circ \varphi) |\varphi'|^{1-\frac{p}{2}} \mathop{d \theta}\\
& = &\int_{\partial \mathbb{D}} |\tau_{1/2}(f)|^p (\omega \circ \varphi) |\varphi'|^{1-\frac{p}{2}} \mathop{d \theta}
\end{eqnarray*}
which establishes that $\tau_{1/2}$ is isometric. Surjectivity is  obvious.

The second statement is an immediate consequence of the first by standard facts about the Szeg\H{o} projection on the unit disc (see \cite[pp. 66]{LS1} or \cite{Nik}).
\end{proof}

\end{lemma}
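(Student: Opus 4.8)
The plan is to leverage the conjugation identity \eqref{Szego relation}, namely $\mathcal{S}=\tau_{1/2}^{-1}\mathcal{S}_0\tau_{1/2}$, together with the observation that $\tau_{1/2}$ — already a surjective isometry between the $L^2$ spaces — is in fact a surjective isometry from $L^p_\omega(\partial\Omega)$ onto $L^p_{(\omega\circ\varphi)\nu}(\partial\mathbb{D})$. To see the latter I would first recall the change-of-variables formula $d\sigma(\zeta)=|\varphi'(e^{\ii\theta})|\,d\theta$ with $\zeta=\varphi(e^{\ii\theta})$, available because $\partial\Omega$ is rectifiable, and then compute for $f\in L^p_\omega(\partial\Omega)$
$$\int_{\partial\Omega}|f|^p\,\omega\,d\sigma=\int_{\partial\mathbb{D}}|f\circ\varphi|^p\,(\omega\circ\varphi)\,|\varphi'|\,d\theta.$$
Splitting $|\varphi'|=|\varphi'|^{p/2}\cdot|\varphi'|^{1-p/2}$ and using that $(\varphi')^{1/2}=\phi$ is a genuine $L^p(\partial\mathbb{D})$ boundary function (as recorded in the introduction via $\phi\in H^2(\mathbb{D})$), the right-hand side equals $\int_{\partial\mathbb{D}}|\tau_{1/2}(f)|^p\,(\omega\circ\varphi)\,\nu\,d\theta$, so $\|f\|_{L^p_\omega(\partial\Omega)}=\|\tau_{1/2}f\|_{L^p_{(\omega\circ\varphi)\nu}(\partial\mathbb{D})}$. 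Surjectivity then follows because $\tau_{1/2}$ has the explicit two-sided inverse $g\mapsto\bigl(g\circ\varphi^{-1}\bigr)\cdot\bigl((\varphi')^{-1/2}\circ\varphi^{-1}\bigr)$, which is well defined since $\varphi$ extends to a homeomorphism $\bar{\mathbb{D}}\to\bar\Omega$, and the same computation run backwards shows it maps $L^p_{(\omega\circ\varphi)\nu}(\partial\mathbb{D})$ into $L^p_\omega(\partial\Omega)$.

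Given this, the equivalence of boundedness follows by conjugation. Because $\tau_{1/2}$ is a pointwise operation not depending on $p$, the identity $\mathcal{S}=\tau_{1/2}^{-1}\mathcal{S}_0\tau_{1/2}$ established on $L^2$ continues to hold on a subspace dense in $L^p_\omega(\partial\Omega)$ — for instance on the $\tau_{1/2}$-preimages of trigonometric polynomials, on which $\mathcal{S}_0$, hence $\mathcal{S}$, acts unambiguously — and on that subspace the isometry from the first step gives $\|\mathcal{S}f\|_{L^p_\omega(\partial\Omega)}=\|\mathcal{S}_0(\tau_{1/2}f)\|_{L^p_{(\omega\circ\varphi)\nu}(\partial\mathbb{D})}$. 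Taking suprema shows that $\mathcal{S}$ extends to a bounded operator on $L^p_\omega(\partial\Omega)$ if and only if $\mathcal{S}_0$ is bounded on $L^p_{(\omega\circ\varphi)\nu}(\partial\mathbb{D})$, with equal operator norms. The second assertion is then immediate from the classical weighted theory of the Szeg\H{o}/conjugate-function operator on the circle (Hunt--Muckenhoupt--Wheeden, in the form recalled in \cite{Nik}): for $1<p<\infty$, $\mathcal{S}_0$ is bounded on $L^p_u(\partial\mathbb{D})$ exactly when $u\in A_p(\partial\mathbb{D})$, and we apply this with $u=(\omega\circ\varphi)\nu$.

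I expect the only genuinely delicate point to be the bookkeeping in the second step: since the $L^p$ mapping properties of $\mathcal{S}$ and $\mathcal{S}_0$ are exactly what is in question, one must be careful about the sense in which these a priori $L^2$ operators are to ``extend'' to $L^p$ and about the persistence of the conjugation identity on the relevant dense subspace. Everything else — the area/angle change of variables, the factorization of $|\varphi'|$, and the interpretation of $(\varphi')^{1/2}$ — is routine once the rectifiability of $\partial\Omega$ and the earlier facts about $\varphi'\in H^1(\partial\mathbb{D})$ are in hand.
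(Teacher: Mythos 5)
Your proposal is correct and follows essentially the same route as the paper: the change of variables $d\sigma(\zeta)=|\varphi'(e^{\ii\theta})|\,d\theta$, the factorization $|\varphi'|=|\varphi'|^{p/2}\,|\varphi'|^{1-p/2}$ showing $\tau_{1/2}$ is a surjective isometry onto $L^p_{(\omega\circ\varphi)\nu}(\partial\mathbb{D})$, and then the classical $A_p$ characterization of weighted boundedness for $\mathcal{S}_0$ on the circle. Your extra care about how the conjugation identity $\mathcal{S}=\tau_{1/2}^{-1}\mathcal{S}_0\tau_{1/2}$ persists on a dense subspace of $L^p_\omega$ is a sensible elaboration of a point the paper leaves implicit, not a different argument.
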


The second lemma is completely analogous to the first. 

\begin{lemma} \label{TransferDiscBergman}
The boundedness of $\mathcal{B}$ on $L^p_\omega( \Omega)$ is equivalent to the boundedness of $\mathcal{B}_0$ on $L^p_{(\omega \circ \varphi) \nu}( \mathbb{D})$ where $\nu(z)=|\varphi'(z)|^{2-p}.$ Therefore, the boundedness of $\mathcal{B}$ on $L^p_\omega(\Omega)$ is equivalent to the condition $(\omega \circ \varphi)\nu \in A_p^{+}( \mathbb{D})$.

\begin{proof}
The proof is virtually identical to the proof of the preceding lemma with minor changes. In particular, the map $\tau_{1/2}$ needs to be replaced by the map $\tau$ and the integration now takes places on $\mathbb{D}.$

\end{proof}
\end{lemma}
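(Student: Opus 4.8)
The plan is to mirror the proof of Lemma~\ref{transfer to disc Szego}, replacing the isometry $\tau_{1/2}$ by the map $\tau(f)=(f\circ\varphi)\cdot\varphi'$ and arc length on $\partial\Omega$ by area measure on $\Omega$. Concretely, I would first verify that $\tau$ is a surjective isometry from $L^p_\omega(\Omega)$ onto $L^p_{(\omega\circ\varphi)\nu}(\mathbb{D})$ with $\nu(z)=|\varphi'(z)|^{2-p}$. Once this is in hand, the transformation law \eqref{Bergman relation}, $\mathcal{B}=\tau^{-1}\mathcal{B}_0\tau$, immediately transfers boundedness of $\mathcal{B}$ on $L^p_\omega(\Omega)$ to boundedness of $\mathcal{B}_0$ on the weighted disc space, and conversely.

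The isometry is a one-line change of variables. Since $\varphi$ is biholomorphic, the real Jacobian of $w=\varphi(z)$ is $|\varphi'(z)|^2$, so for $f\in L^p_\omega(\Omega)$ I would compute
\begin{align*}
\int_\Omega |f|^p\,\omega\,dA
&= \int_{\mathbb{D}} |f\circ\varphi|^p\,(\omega\circ\varphi)\,|\varphi'|^2\,dA \\
&= \int_{\mathbb{D}} |f\circ\varphi|^p\,|\varphi'|^p\,(\omega\circ\varphi)\,|\varphi'|^{2-p}\,dA \\
&= \int_{\mathbb{D}} |\tau(f)|^p\,(\omega\circ\varphi)\,\nu\,dA,
\end{align*}
where in the last line I used $|\tau(f)|^p=|f\circ\varphi|^p|\varphi'|^p$ and the definition of $\nu$. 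This shows $\tau$ is isometric; surjectivity follows because $\varphi'$ is non-vanishing on $\mathbb{D}$, so $\tau$ is inverted by $g\mapsto (g\circ\varphi^{-1})\cdot(\varphi'\circ\varphi^{-1})^{-1}$, and running the same computation backwards places the preimage in $L^p_\omega(\Omega)$. As a consistency check, at $p=2$ one has $\nu\equiv 1$, recovering the surjective $L^2$ isometry already recorded in the setup.

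With $\tau$ an isometric isomorphism of the two weighted spaces, the identity $\mathcal{B}=\tau^{-1}\mathcal{B}_0\tau$ yields $\|\mathcal{B}\|_{L^p_\omega(\Omega)\to L^p_\omega(\Omega)}=\|\mathcal{B}_0\|_{L^p_{(\omega\circ\varphi)\nu}(\mathbb{D})\to L^p_{(\omega\circ\varphi)\nu}(\mathbb{D})}$, so one operator is bounded precisely when the other is, proving the first assertion. The second statement is then immediate from the Bekoll\'e--Bonami characterization quoted in the introduction: $\mathcal{B}_0$ is bounded on $L^p_\mu(\mathbb{D})$ if and only if $\mu\in A_p^{+}(\mathbb{D})$ (see \cite{Bek1}), applied with $\mu=(\omega\circ\varphi)\nu$.

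I would not expect a genuine obstacle here; the entire content is the bookkeeping of the area Jacobian $|\varphi'|^2$, which forces the exponent $2-p$ in $\nu$ (in contrast to the exponent $1-\tfrac{p}{2}$ produced in the Szeg\H{o} case by the arc length Jacobian $|\varphi'|$ together with the half-power appearing in $\tau_{1/2}$). The only points meriting a word of care are the well-definedness and surjectivity of $\tau^{-1}$, which rely on $\varphi'\neq 0$ throughout $\mathbb{D}$, and the applicability of the weighted disc theory, which enters as the cited input rather than as something to be reproved.
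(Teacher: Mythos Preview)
Your proposal is correct and follows exactly the approach the paper indicates: you replace $\tau_{1/2}$ by $\tau$, carry out the area change of variables with Jacobian $|\varphi'|^2$ to produce the weight $\nu=|\varphi'|^{2-p}$, and then invoke \eqref{Bergman relation} together with the B\'ekoll\'e--Bonami characterization. The paper's own proof merely asserts that this is what one should do; you have written out the details faithfully.
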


The following two propositions will enable us to prove Theorems \ref{SzegoMain} and \ref{BergmanMain}.

\begin{prop}\label{SzegoHolder}
Let $1<p_0<p<\frac{p_0}{p_0-1}$ and suppose $(\omega \circ \varphi) \in A_{\frac{p}{p_0}} (\partial \mathbb{D})\cap \text{RH}_{\left(\frac{p_0'}{p}\right)'}(\partial \mathbb{D}).$ Let $\nu(e^{\ii \theta})=|\varphi'(e^{\ii \theta})|^{1-\frac{p}{2}}.$ If $\nu^{\frac{p_0}{p(p_0-1)}} \in A_2(\partial \mathbb{D})$, then $(\omega \circ \varphi)\nu \in A_p(\partial \mathbb{D})$ and $$[(\omega \circ \varphi)\nu ]_{A_p} \leq [\omega \circ \varphi]_{A_{\frac{p}{p_0}}}\times  [\omega \circ \varphi]_{\text{RH}_{\left(\frac{p_0'}{p}\right)'}} \times \left[\nu^{\frac{p_0'}{p}}\right]_{A_2}^{\frac{p}{p_0'}}.$$

\begin{proof}
We must show 
$$\sup_{I} \fint_{I} (\omega \circ \varphi) \nu \mathop{d \tilde{\theta}} \left(\fint_{I} ((\omega \circ \varphi)\nu)^{-\frac{1}{p-1}}\mathop{d \tilde{\theta}}\right)^{p-1} \leq [\omega \circ \varphi]_{A_{\frac{p}{p_0}}}\times  [\omega \circ \varphi]_{\text{RH}_{\left(\frac{p_0'}{p}\right)'}} \times \left[\nu^{\frac{p_0'}{p}}\right]_{A_2}^{\frac{p}{p_0'}}.$$ 

Applying H\"older's inequality with exponents $r=\frac{p_0'}{p}$ and $r'$ and using the fact that $(\omega \circ \varphi) \in \text{RH}_{r'}(\partial \mathbb{D})$, we obtain
\begin{eqnarray*}
\fint_{I}(\omega \circ \varphi)\nu \mathop{d \tilde{\theta}} & \leq &  \left(\fint_{I} (\omega \circ \varphi)^{r'}\mathop{d \tilde{\theta}} \right)^{\frac{1}{r'}} \left(\fint_{I} \nu^r \mathop{d \tilde{\theta}}\right)^{\frac{1}{r}}\\
& \leq & [\omega \circ \varphi]_{\text{RH}_{r'}} \left(\fint_{I} (\omega \circ \varphi)\mathop{d \tilde{\theta}} \right) \left(\fint_{I} \nu^r \mathop{d \tilde{\theta}}\right)^{\frac{1}{r}}.
\end{eqnarray*}

On the other hand, let $s=\frac{p}{p_0}$. Let $t=\frac{(p-1)}{s-1}$ and note $t>1$. Then, applying H\"{o}lder's inequality with exponents $t$ and $t'=\frac{p_0(p-1)}{p(p_0-1)}=(p-1)r$, we obtain

\begin{eqnarray*}
\left(\fint_{I} ((\omega \circ \varphi)\nu)^{-\frac{1}{p-1}}\mathop{d \tilde{\theta}}\right)^{p-1} & \leq & \left(\fint_{I} (\omega \circ \varphi)^{-\frac{1}{s-1}} \mathop{d \tilde{\theta}}\right)^{s-1} \left(\fint_{I}\nu^{-r} \mathop{d \tilde{\theta}}\right)^{\frac{1}{r}}.
\end{eqnarray*}

Putting these facts together, we then get

\begin{align*}
& \fint_{I} (\omega \circ \varphi) \nu \mathop{d \tilde{\theta}} \left(\fint_{I} ((\omega \circ \varphi)\nu)^{-\frac{1}{p-1}}\mathop{d \tilde{\theta}}\right)^{p-1} \\
\leq & [\omega \circ \varphi]_{\text{RH}_{r'}} \left(\fint_{I} (\omega \circ \varphi)\mathop{d \tilde{\theta}} \right) \left(\fint_{I} \nu^{r} \mathop{d \tilde{\theta}}\right)^{\frac{1}{r}} \left(\fint_{I} (\omega \circ \varphi)^{-\frac{1}{s-1}} \mathop{d \tilde{\theta}}\right)^{s-1} \left(\fint_{I}\nu^{-r} \mathop{d \tilde{\theta}}\right)^{\frac{1}{r}}\\
\leq &   [\omega \circ \varphi]_{A_s(\partial \mathbb{D})} \times [\omega \circ \varphi]_{\text{RH}_{r'}} \times \left[\nu^{r}\right]_{A_2(b\mathbb{D})}^{\frac{1}{r}}
\end{align*}
as required. 

\end{proof}
\end{prop}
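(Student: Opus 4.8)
The plan is to bound the $A_p$ characteristic of $(\omega\circ\varphi)\nu$ by splitting each of the two factors in the $A_p$ supremum using H\"older's inequality, matched to the given structural hypotheses. Fix an arc $I\subset\partial\mathbb{D}$. For the first factor $\fint_I (\omega\circ\varphi)\nu\, d\tilde\theta$, I would apply H\"older with the conjugate pair $r=\tfrac{p_0'}{p}$ and $r'=\left(\tfrac{p_0'}{p}\right)'$ (note $r>1$ exactly because $p<p_0'$), which separates $\omega\circ\varphi$ (raised to the power $r'$) from $\nu$ (raised to the power $r$). The reverse H\"older hypothesis $\omega\circ\varphi\in\mathrm{RH}_{r'}(\partial\mathbb{D})$ then lets me replace $\left(\fint_I(\omega\circ\varphi)^{r'}\right)^{1/r'}$ by $[\omega\circ\varphi]_{\mathrm{RH}_{r'}}\fint_I(\omega\circ\varphi)$, leaving a clean product of an ordinary average of $\omega\circ\varphi$ and the $\nu^r$-average.

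For the second factor $\left(\fint_I((\omega\circ\varphi)\nu)^{-1/(p-1)}\right)^{p-1}$, the idea is to apply H\"older to the integrand $(\omega\circ\varphi)^{-1/(p-1)}\nu^{-1/(p-1)}$ with exponents $t$ and $t'$ chosen so that the $\omega\circ\varphi$-part acquires exponent $-\tfrac{1}{s-1}$ (where $s=\tfrac{p}{p_0}$), i.e. $t=\tfrac{p-1}{s-1}$, and the $\nu$-part acquires exponent $-r$, which forces $t'=(p-1)r=\tfrac{p_0(p-1)}{p(p_0-1)}$; one checks $\tfrac1t+\tfrac1{t'}=1$ and $t>1$ using $p>p_0$. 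After raising to the power $p-1$ this produces the factor $\left(\fint_I(\omega\circ\varphi)^{-1/(s-1)}\right)^{s-1}$ times $\left(\fint_I\nu^{-r}\right)^{1/r}$. Multiplying the two estimates together, the $\omega\circ\varphi$ averages recombine into $[\omega\circ\varphi]_{A_s}$ with $s=\tfrac{p}{p_0}$, the reverse-H\"older constant is already extracted, and the two $\nu$-averages combine into $\left(\fint_I\nu^{r}\right)^{1/r}\left(\fint_I\nu^{-r}\right)^{1/r}$, which I recognize as $\left(\fint_I(\nu^r)\fint_I(\nu^r)^{-1}\right)^{1/r}=[\nu^r]_{A_2(\partial\mathbb{D})}^{1/r}$ after noting $(\nu^r)^{-1}=\nu^{-r}$; since $r=\tfrac{p_0'}{p}$ and $1-\tfrac{p}{2}$ has whatever sign, $\nu^r=|\varphi'|^{(1-p/2)\cdot p_0'/p}$, matching the hypothesis that this lies in $A_2$. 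Taking the supremum over $I$ gives the stated bound with exponent $1/r=\tfrac{p}{p_0'}$ on the $A_2$ characteristic.

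The only genuine bookkeeping obstacle is verifying the admissibility of the exponents: that $r=\tfrac{p_0'}{p}>1$, that $t=\tfrac{p-1}{s-1}>1$, and that the chosen $t'$ really is the H\"older conjugate of $t$ and simultaneously equals $(p-1)r$ so the $\nu$-exponents line up to the common value $r$. These all reduce to the standing inequality $p_0<p<p_0'$ together with $p_0<2$; none is deep, but the identity $t'=\tfrac{p_0(p-1)}{p(p_0-1)}=(p-1)r$ is the linchpin that makes the two separate H\"older splits compatible, and it should be displayed explicitly. Everything else is a direct assembly of the two displayed chains of inequalities already sketched above, so no further machinery is needed.
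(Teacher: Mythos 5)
Your proposal is correct and follows essentially the same route as the paper: the same H\"older split with $r=\tfrac{p_0'}{p}$ on the first average combined with the reverse H\"older hypothesis, the same split with $t=\tfrac{p-1}{s-1}$, $t'=(p-1)r$ on the second, and the same recombination of the $\nu$-averages into $[\nu^{r}]_{A_2}^{1/r}$. The exponent bookkeeping you flag (including the identity $t'=\tfrac{p_0(p-1)}{p(p_0-1)}=(p-1)r$) checks out, so nothing is missing.
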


\begin{prop} \label{BergmanHolder}
Let $1<p_0<p<\frac{p_0}{p_0-1}$ and suppose $(\omega \circ \varphi) \in A^{+}_{\frac{p}{p_0}} (\mathbb{D})\cap \text{RH}_{\left(\frac{p_0'}{p}\right)'}(\mathbb{D}).$ Let $\nu(z)=|\varphi'(z)|^{2-p}.$ If $\nu^{\frac{p_0}{p(p_0-1)}} \in A^{+}_2(\mathbb{D})$, then $(\omega \circ \varphi)\nu \in A^{+}_p(\mathbb{D})$ and $$[(\omega \circ \varphi)\nu ]_{A_p^+} \leq [\omega \circ \varphi]_{A_{\frac{p}{p_0}}^+}\times  [\omega \circ \varphi]_{\text{RH}_{\left(\frac{p_0'}{p}\right)'}} \times \left[\nu^{\frac{p_0'}{p}}\right]_{A_2^+}^{\frac{p}{p_0'}}.$$
\end{prop}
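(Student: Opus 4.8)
The plan is to mirror the proof of Proposition \ref{SzegoHolder} essentially verbatim, replacing arcs $I\subset\partial\mathbb{D}$ by Carleson boxes $D_r(z)\cap\mathbb{D}$ with $z\in\partial\mathbb{D}$, normalized arc length by normalized area measure $dA$, and the $A_p$, $\mathrm{RH}_q$, $A_2$ classes by their $A_p^+$, $\mathrm{RH}_q(\mathbb{D})$, $A_2^+$ analogues. Since all the weight-class definitions in Section \ref{Weighted} are formally identical in the two settings (just with $\fint$ now meaning the average over the Carleson region), H\"older's inequality applies over each fixed region $D_r(z)\cap\mathbb{D}$ in exactly the same way, and no special property of the disc geometry beyond the fact that we are averaging over a fixed measurable set of finite positive measure is used.

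Concretely: fix a Carleson region $Q=D_r(z)\cap\mathbb{D}$ with $z\in\partial\mathbb{D}$, $r>0$. First I would apply H\"older's inequality with exponents $r_0=\frac{p_0'}{p}$ and $r_0'$ to $\fint_Q(\omega\circ\varphi)\nu\,dA$, then invoke the hypothesis $(\omega\circ\varphi)\in\mathrm{RH}_{r_0'}(\mathbb{D})$ to replace $\bigl(\fint_Q(\omega\circ\varphi)^{r_0'}\bigr)^{1/r_0'}$ by $[\omega\circ\varphi]_{\mathrm{RH}_{r_0'}}\fint_Q(\omega\circ\varphi)\,dA$, obtaining
\begin{equation*}
\fint_Q(\omega\circ\varphi)\nu\,dA \;\leq\; [\omega\circ\varphi]_{\mathrm{RH}_{r_0'}}\Bigl(\fint_Q(\omega\circ\varphi)\,dA\Bigr)\Bigl(\fint_Q\nu^{r_0}\,dA\Bigr)^{1/r_0}.
\end{equation*}
Next, with $s=\frac{p}{p_0}$ and $t=\frac{p-1}{s-1}>1$, $t'=(p-1)r_0$, I would apply H\"older to the $(\omega\circ\varphi)\nu$ term raised to $-\frac{1}{p-1}$, splitting it as $(\omega\circ\varphi)^{-1/(p-1)}\cdot\nu^{-1/(p-1)}$ and getting
\begin{equation*}
\Bigl(\fint_Q\bigl((\omega\circ\varphi)\nu\bigr)^{-\frac{1}{p-1}}\,dA\Bigr)^{p-1}\;\leq\;\Bigl(\fint_Q(\omega\circ\varphi)^{-\frac{1}{s-1}}\,dA\Bigr)^{s-1}\Bigl(\fint_Q\nu^{-r_0}\,dA\Bigr)^{1/r_0}.
\end{equation*}
Multiplying the two estimates, the two factors involving $\omega\circ\varphi$ combine into $[\omega\circ\varphi]_{A_s^+}$ by the definition of the $A_{p/p_0}^+$ constant, the two $\nu^{\pm r_0}$ factors combine into $[\nu^{r_0}]_{A_2^+}^{1/r_0}$ by the definition of the $A_2^+$ constant, and the $\mathrm{RH}_{r_0'}$ factor is carried along; taking the supremum over all $Q$ yields the claimed bound $[(\omega\circ\varphi)\nu]_{A_p^+}\leq[\omega\circ\varphi]_{A_{p/p_0}^+}\times[\omega\circ\varphi]_{\mathrm{RH}_{(p_0'/p)'}}\times[\nu^{p_0'/p}]_{A_2^+}^{p/p_0'}$ (noting $r_0=p_0'/p$ and $1/r_0=p/p_0'$). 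Finiteness of $\nu$ and its powers as weights is inherited from $|\varphi'|$ being holomorphic and non-vanishing.

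I do not expect a genuine obstacle here: the whole point is that the argument is geometry-agnostic, and the only thing to be careful about is bookkeeping — verifying that the exponents $r_0,r_0',t,t',s$ are in the admissible ranges (in particular $t>1$, which forces $p<p_0'$, and $r_0>1$, which is exactly $p<p_0'$ again, both guaranteed by the hypothesis $p_0<p<p_0/(p_0-1)$) and that $\frac{1}{r_0}=\frac{p}{p_0'}$ so the final exponent on $[\nu^{p_0'/p}]_{A_2^+}$ reads correctly. The mildest subtlety worth a sentence is that the Carleson regions in the $A_p^+(\mathbb{D})$ and $\mathrm{RH}_q(\mathbb{D})$ definitions are genuinely the same family $\{D_r(z)\cap\mathbb{D}: z\in\partial\mathbb{D},\, r>0\}$, so the supremum in the $A_p^+$ quantity for $(\omega\circ\varphi)\nu$ is taken over the same regions that appear in the defining suprema for the three factor constants; this makes the termwise comparison legitimate. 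I would therefore present the proof tersely, noting that it is identical to that of Proposition \ref{SzegoHolder} after the substitutions described, and only write out the two displayed H\"older estimates above for the reader's convenience.
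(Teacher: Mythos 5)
Your proposal is correct and is exactly the paper's approach: the paper's proof of this proposition consists of the single remark that it ``follows exactly the same way as Proposition \ref{SzegoHolder},'' and your transcription of that argument to Carleson regions $D_r(z)\cap\mathbb{D}$, with the same two H\"older applications and the same exponent bookkeeping, is what is intended. (One harmless slip: $t>1$ follows from $p>p_0$ and $p_0>1$, not from $p<p_0'$; all hypotheses are satisfied either way.)
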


\begin{proof}
The proof follows exactly the same way as Proposition \ref{SzegoHolder}.
\end{proof}

\begin{proof}[Proof of Theorem \ref{SzegoMain}]
The theorem will follow from Lemma \ref{transfer to disc Szego} and Proposition \ref{SzegoHolder} once we have deduced the following weighted norm estimates for the Szeg\H{o} projection on the circle: $$\|\mathcal{S}_0\|_{L^p_\omega( \partial \mathbb{D}) \rightarrow L^p_\omega( \partial \mathbb{D}) } \lesssim C_p [\omega]_{A_p(\partial \mathbb{D})}^{\max\{1,\frac{1}{p-1}\}}, \quad C_p= \max\{p, \frac{1}{p-1}\}.$$

This estimate can be deduced as follows. We will begin by proving it for $p=2$, and then we will extend the estimate to all $p \in (1,\infty)$ via extrapolation. It is also probably possible in some way to deduce this bound from the case of the Hilbert transform on the line, but we argue directly instead.  First, note that the Szeg\H{o} projection on the circle has kernel (with respect to normalized arc length measure $d \tilde{\theta}$) $$K(z,w)=\frac{1}{1-\overline{w}z}$$ in the precise sense that if $f \in L^2(\partial \mathbb{D})$ we have, for almost every  $z \in \partial{\mathbb{D}} \setminus \text{supp}{f}$:
$$ \mathcal{S}_0 f(z)= \int_{\partial \mathbb{D}} \frac{f(w)}{1-\overline{w}z}\,  d \tilde{\theta}(w). $$
This fact is a well-known consequence of the Plemelj jump formula. 

Moreover, we see that $\mathcal{S}_0$ can be viewed as a Calder\'{o}n-Zygmund operator on the circle $\partial \mathbb{D}$ viewed as a space of homogeneous type with respect to the measure $d\tilde{\theta}$ and arc length metric $$d(z,w)=|\theta-\phi|, \quad z=e^{\ii \theta}, w= e^{\ii \phi} \text{ and }\theta, \phi \in [0,2\pi).$$ We remark in passing that one could also take the Euclidean metric by viewing $z,w$ as living in the ambient space $\mathbb{C}$, and this would have no serious impact on the proof. Then, $\mathcal{S}_0$ is obviously bounded on $L^2( \partial \mathbb{D})$, and the size and smoothness estimates on its kernel then follow from direct calculation, which we omit. The sharp weighted estimates on $L^p$ for $p=2$ then follow from the $A_2$ theorem on spaces of homogeneous type (see \cite[Theorem 1.1]{AV}), which states that if $T$ is a Calder\'{o}n-Zygmund operator on a space of homogeneous type and $w$ is an $A_2$ weight, then

\begin{equation} \|T\|_{L^2(w) \rightarrow L^2(w)} \lesssim [w]_{A_2} .\label{A2Theorem} \end{equation}

Given $\zeta \in \partial \mathbb{D}$ and $0<r<2 \pi$, let $I(\zeta,r)= \{\xi \in \partial \mathbb{D}: d(\xi,\zeta)<r\}$. Let $\mathfrak{M}_{\partial \mathbb{D}}$ be the Hardy-Littlewood Maximal function defined 
$$\mathfrak{M}_{\partial \mathbb{D}}f(z)= \sup_{\substack{\zeta \in \partial \mathbb{D}\\r \in (0,2 \pi):\\ z \ni I(\zeta,r) }} \frac{1}{|I(\zeta,r)|} \int_{I(\zeta,r)}|f| \, d\tilde{\theta}. $$

A line-by-line imitation of the Euclidean extrapolation proof (replace $\mathbb{R}^n$ by $\partial \mathbb{D}$, etc) in \cite{CMP}*{Theorem 3.22}, together with \eqref{A2Theorem} yields the estimates 

\begin{equation} \int_{\partial \mathbb{D}} |\mathcal{S}_0f| ^p \omega \, d\theta\lesssim \|\mathfrak{M}_{\partial \mathbb{D}}\|_{L^p_{\omega}(\partial \mathbb{D}) \rightarrow L^p_{\omega}(\partial \mathbb{D})}^{2-p}  [\omega]_{A_p} \left(\int_{\partial \mathbb{D}} |f| ^p \omega \, d \theta \right)  ,\quad 1<p<2 \label{ExtrapLower}\end{equation}

and

\begin{equation} \int_{\partial \mathbb{D}} |\mathcal{S}_0f| ^p \omega \, d\theta \lesssim \|\mathfrak{M}_{\partial \mathbb{D}}\|_{L^p_{\omega'}(\partial \mathbb{D}) \rightarrow L^p_{\omega'}(\partial \mathbb{D})}^{\frac{p-2}{p-1}}  [\omega]_{A_p}^{\frac{1}{p-1}} \left(\int_{\partial \mathbb{D}} |f| ^p \omega \, d\mu \right)  ,\quad \omega'= \omega^{1-p'} , \quad 2<p< \infty \label{ExtrapUpper} ,\end{equation}
where $\frac{1}{p}+\frac{1}{p'}=1$ and the implicit constants are independent of the exponent $p$ and weight $\omega.$ 

Finally, the following estimate for the maximal function (which is also valid on any space of homogeneous type) is given in \cite{HK}*{Proposition 7.13}:
\begin{equation}
\|\mathfrak{M}_{\partial \mathbb{D}}\|_{L^p_{\omega}(\partial \mathbb{D}) \rightarrow L^p_{\omega}(\partial \mathbb{D})} \lesssim (p')^{1/p} p^{1/p'} [\omega]_{A_p}^{\frac{1}{p-1}}, 1<p<\infty. \label{MaximalExtrap}
\end{equation}
The result then follows by combining \eqref{ExtrapLower}, \eqref{ExtrapUpper}, and \eqref{MaximalExtrap}.

We also refer the reader to \cite{DGP} for a similar estimate in the Euclidean setting of $\mathbb{R}^n.$ 

\end{proof}

\begin{proof}[Proof of Theorem \ref{BergmanMain}]
The theorem follows from Lemma \ref{TransferDiscBergman}, Proposition \ref{BergmanHolder}, and the fact that $\|\mathcal{B}_0\|_{L^p_\omega( \mathbb{D})} \lesssim C_p [\omega]_{A_p^+(\mathbb{D})}^{\max\{1,\frac{1}{p-1}\}}.$ This last fact can be found in \cite[Theorem 2]{RTW} (the quantative dependence on $p$ is implicit in the proof but is also stated explicitly in \cite[Theorem 1.2]{HWW}). In the notation of the reference, we take $b=0$ and $u=\omega$, $P_0=\mathcal{B}_0$, and $B_{p,0}= A_p^{+}(\mathbb{D}).$  We remark that here the quantitative estimate is a direct consequence of a ``sparse domination'' for the Bergman projection, and does not involve singular integral theory. Indeed, the Bergman projection is easier to deal with than the Szeg\H{o} projection because it is not a true singular integtal operator; cancellation does not play a role in its boundedness and only the size of its kernel matters. See also \cite{PR} for a similar result on the upper-half plane. 

\end{proof}

\subsection{Applications} \label{Apps}
We now present several applications of Theorems \ref{SzegoMain} and \ref{BergmanMain}. We consider in particular local graph domains and bounded Lipschitz domains. We say $\Omega$ is a graph domain if there exists an angle $\alpha>0$ and a continuous function $\gamma: \mathbb{R} \rightarrow \mathbb{R}$  so that if $z=x+\ii y,$ $$\Omega=\{e^{\ii \alpha}z: y > \gamma(x)\}.$$ Similarly, we say $\Omega$ is a local graph domain if there exist finitely many graph domains $\Omega_1,\dots, \Omega_n$ together with arcs on the boundary $\Gamma_1,\dots, \Gamma_n$ covering $\partial \Omega$ and discs $D_j$ so that $\Gamma_j \subset D_j$ and $\Omega \cap D_j= \Omega_j \cap D_j$ for $1 \leq j \leq n$. If each $\gamma_j$ is a Lipschitz map with Lipschitz constant $M_j$, we further say that $\Omega$ is a bounded Lipschitz domain with Lipschitz constant $M= \max_{1 \leq j \leq n} M_j.$
 
\begin{cor}\label{SzegoGraph} Let $\Omega$ be a bounded, simply connected local graph domain. Then  the Szeg\H{o} projection $\mathcal{S}$ is bounded on $L^p_\omega(\partial \Omega)$ for  $\frac{4}{3}<p<4$ and all $\omega$ satisfying $(\omega \circ \varphi) \in A_{\frac{3p}{4}}(\partial \mathbb{D}) \cap \text{RH}_{\left(\frac{4}{p}\right)'}(\partial \mathbb{D})$.
\end{cor}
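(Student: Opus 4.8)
The plan is to derive Corollary \ref{SzegoGraph} as a direct specialization of Theorem \ref{SzegoMain} with the choice $p_0 = \tfrac{4}{3}$, so that $p_0' = 4$ and the range $p_0 < p < p_0'$ becomes $\tfrac{4}{3} < p < 4$. With this $p_0$, one computes $\tfrac{p}{p_0} = \tfrac{3p}{4}$ and $\left(\tfrac{p_0'}{p}\right)' = \left(\tfrac{4}{p}\right)'$, which matches exactly the weight hypotheses $(\omega\circ\varphi)\in A_{3p/4}(\partial\mathbb{D})\cap \mathrm{RH}_{(4/p)'}(\partial\mathbb{D})$ stated in the corollary. So the only substantive thing to verify is the remaining hypothesis of Theorem \ref{SzegoMain}: that the Riemann map $\varphi$ for a bounded simply connected local graph domain satisfies $|\varphi'|^{(1-\frac{p}{2})(\frac{p_0}{p(p_0-1)})}\in A_2(\partial\mathbb{D})$ for all $p$ in the given range. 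With $p_0=\tfrac{4}{3}$ the exponent simplifies: $\tfrac{p_0}{p(p_0-1)} = \tfrac{4}{p}$, so the condition reads $|\varphi'|^{(1-\frac{p}{2})\cdot\frac{4}{p}} = |\varphi'|^{\frac{4}{p}-2}\in A_2(\partial\mathbb{D})$.

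First I would recall, citing Lanzani--Stein \cite{LS1}, the structural fact that for a (bounded, simply connected) local graph domain the derivative of the Riemann map has the form $|\varphi'| \asymp |B|$ or more precisely factors through a bounded argument: the key input is that $\log|\varphi'|$ has bounded mean oscillation-type control, or even more concretely that $|\varphi'|^{\pm s}\in A_2(\partial\mathbb{D})$ for $s$ in a neighborhood of $0$ determined by the Lipschitz/graph constants. In fact, the relevant statement in \cite{LS1} is that for local graph domains the Szeg\H{o} projection is bounded on unweighted $L^p$ precisely for $p_0 < p < p_0'$ with $p_0 = \tfrac43$; by Lemma \ref{transfer to disc Szego} this is equivalent to $|\varphi'|^{1-\frac{p}{2}}\in A_p(\partial\mathbb{D})$ for that range, and self-improvement of Muckenhoupt classes (openness of $A_p$) then yields $|\varphi'|^{\frac{4}{p}-2}\in A_2$ throughout the open interval. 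I would then check the range of the exponent $\tfrac{4}{p}-2$: as $p$ runs over $(\tfrac43,4)$ it runs over $(-1,1)$, and the endpoints correspond exactly to the unweighted borderline, so the strict inequalities give exactly what is needed.

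Having verified the hypothesis, the conclusion and the quantitative norm bound follow immediately by invoking Theorem \ref{SzegoMain} with $p_0=\tfrac43$; I would state the resulting operator-norm estimate explicitly if desired, substituting $[\,|\varphi'|^{\frac{4}{p}-2}\,]_{A_2}$ in place of the general factor, but the qualitative boundedness is the headline of the corollary.

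The main obstacle I anticipate is the justification of $|\varphi'|^{\frac{4}{p}-2}\in A_2(\partial\mathbb{D})$ for local graph domains: one must be careful that the argument for graph domains is genuinely local (the covering by boundary arcs $\Gamma_j$ with $\Omega\cap D_j = \Omega_j\cap D_j$) and that the $A_2$ property of $|\varphi'|$ to small powers is stable under the welding/localization. The cleanest route, which I would take, is not to reprove this from scratch but to quote the equivalence established in \cite{LS1} (bounded $L^p$ for $p_0<p<p_0'$, $p_0=\tfrac43$, on these domains) together with Lemma \ref{transfer to disc Szego}, and then apply the standard openness/self-improvement property of $A_p$ weights to pass from the borderline membership to the required $A_2$ membership in the open subinterval — this converts a potentially delicate geometric estimate into a soft functional-analytic step. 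One subtlety to flag: one should confirm that the map $p\mapsto \tfrac{4}{p}-2$ and the extrapolation are compatible, i.e.\ that $|\varphi'|^{1-\frac p2}\in A_p$ for all $p\in(\tfrac43,4)$ does imply $|\varphi'|^{s}\in A_2$ for the corresponding $s=\tfrac4p-2\in(-1,1)$; this follows because $A_p$ membership of $h^{1-p/2}$ is equivalent to a uniform reverse-type bound on $|I|^{-1}\!\int_I h^{1-p/2}\,(|I|^{-1}\!\int_I h^{\frac{1}{p/2-1}}\cdot\text{(normalization)})^{p-1}$, which after the substitution unwinds to the $A_2$ condition for $h^{s}$.
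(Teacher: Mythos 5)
Your overall frame is the same as the paper's: specialize Theorem \ref{SzegoMain} with $p_0=\frac43$, note $\frac{p}{p_0}=\frac{3p}{4}$, $\left(\frac{p_0'}{p}\right)'=\left(\frac4p\right)'$, and reduce everything to checking that $|\varphi'|^{(1-\frac p2)\frac4p}=|\varphi'|^{\frac4p-2}\in A_2(\partial\mathbb{D})$, where $\frac4p-2$ ranges over $(-1,1)$. That part is correct. The gap is in how you propose to verify this hypothesis. You want to quote only the \emph{unweighted} $L^p$ boundedness of $\mathcal{S}$ on local graph domains for $\frac43<p<4$, translate it via Lemma \ref{transfer to disc Szego} into $|\varphi'|^{1-\frac p2}\in A_p(\partial\mathbb{D})$ for that range, and then pass to $|\varphi'|^{\frac4p-2}\in A_2$ by ``openness/self-improvement of $A_p$'' or by an ``unwinding'' equivalence. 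Neither step works. For a fixed $p$, $h^{1-\frac p2}\in A_p$ is \emph{not} equivalent to $h^{\frac4p-2}\in A_2$: already for the power weight $h(\theta)=|\theta|^{1.8}$ and $p=3$ one has $h^{1-\frac p2}=|\theta|^{-0.9}\in A_3$ while $h^{\frac4p-2}=|\theta|^{-1.2}\notin A_2$. Nor does the family of conditions over all $p\in(\frac43,4)$ rescue the implication: from $h^{1-\frac q2}\in A_q$ one can only extract, via the inclusion $A_q\subset A_2$ ($q<2$) and $A_q$ duality ($q>2$), that $h^t\in A_2$ for $|t|<\frac13$, and one can build weights (e.g.\ bumps of height $A_k$ on sets of relative measure $A_k^{-1/3}$) satisfying $h^{1-\frac q2}\in A_q$ for all $q\in(\frac43,4)$ while $h^{0.9}\notin A_2$. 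Openness of $A_p$ only gives an $\varepsilon$-improvement depending on the weight and cannot bridge from exponent $\frac13$ up to exponents close to $1$.

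The paper closes exactly this gap by citing a genuinely geometric input rather than a weight-theoretic self-improvement: \cite[Proposition 2.2]{LS1} states that for a local graph domain the Riemann map satisfies $|\varphi'|^{r}\in A_2(\partial\mathbb{D})$ for all $0\le r<1$, and hence for $-1<r<1$ since $w\in A_2$ if and only if $w^{-1}\in A_2$. This directly supplies the hypothesis $|\varphi'|^{\frac4p-2}\in A_2$ for every $p\in(\frac43,4)$, after which Theorem \ref{SzegoMain} yields the corollary. So you should replace the ``unweighted boundedness plus extrapolation/openness'' step with a direct appeal to that proposition (or reprove it); as written, the key verification in your argument does not go through.
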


\begin{cor} \label{SzegoLip} Let $\Omega$ be a bounded Lipschitz domain with Lipschitz constant $M$ and let $p_M=2(\frac{\pi}{2 \arctan{M}}+1).$ Then the Szeg\H{o} projection $\mathcal{S}$ is bounded on $L^p_\omega(\partial \Omega)$ for $p_M'<p<p_M$ and all $\omega$ satisfying $(\omega \circ \varphi) \in A_{\frac{p}{p_M'}}(\partial \mathbb{D}) \cap \text{RH}_{\left(\frac{p_M}{p}\right)'}(\partial \mathbb{D})$.
\end{cor}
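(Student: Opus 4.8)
The plan is to reduce Corollary \ref{SzegoLip} to a direct application of Theorem \ref{SzegoMain} by checking its two hypotheses for a bounded Lipschitz domain $\Omega$ with Lipschitz constant $M$, taking $p_0 = p_M' = 2\bigl(\frac{\pi}{2\arctan M}+1\bigr)'$. Note that with this choice $p_0' = p_M$, so the interval $p_0 < p < p_0'$ in Theorem \ref{SzegoMain} is exactly the claimed range $p_M' < p < p_M$, and the weight condition $\omega\circ\varphi \in A_{p/p_0} \cap \mathrm{RH}_{(p_0'/p)'}$ becomes precisely $\omega\circ\varphi \in A_{p/p_M'}(\partial\mathbb{D}) \cap \mathrm{RH}_{(p_M/p)'}(\partial\mathbb{D})$. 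So the entire content of the corollary is verifying the single structural hypothesis on the Riemann map, namely that
$$|\varphi'|^{\left(1-\frac{p}{2}\right)\left(\frac{p_0}{p(p_0-1)}\right)} \in A_2(\partial\mathbb{D}) \quad\text{for all } p_0 < p < p_0'.$$

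First I would recall the classical description of the boundary behavior of the Riemann map onto a Lipschitz domain: by the work of Lavrentiev / Warschawski (and as used by Lanzani--Stein in \cite{LS1}), if $\Omega$ is a bounded Lipschitz domain with Lipschitz constant $M$, then $|\varphi'|^{s} \in A_2(\partial\mathbb{D})$ precisely (or at least) for a range of exponents $s$ controlled by $M$ — specifically, the key fact is that $\arg\varphi'$ is, up to a bounded additive term, $\arctan M$ times a function of modulus at most $1$ on the appropriate wedge, so $|\varphi'|$ behaves locally like a power $|z - z_0|^{\beta}$ with $|\beta| < \frac{2\arctan M}{\pi}$ near corners; powers $|z-z_0|^{a}$ on the circle lie in $A_2(\partial\mathbb{D})$ exactly when $-1 < a < 1$. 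Combining these two ingredients, $|\varphi'|^s \in A_2(\partial\mathbb{D})$ whenever $|s| \cdot \frac{2\arctan M}{\pi} < 1$, i.e. whenever $|s| < \frac{\pi}{2\arctan M}$.

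Then I would carry out the elementary arithmetic to see that the exponent $s = s(p) := \bigl(1 - \frac{p}{2}\bigr)\cdot\frac{p_0}{p(p_0-1)}$ satisfies $|s(p)| < \frac{\pi}{2\arctan M}$ for every $p$ in the open interval $(p_0', p_0)$ with this choice of $p_0$. The function $p \mapsto s(p)$ is monotone on $(1,\infty)$, vanishes at $p = 2$, and one computes its values at the endpoints $p = p_0$ and $p = p_0' = p_0'$: at $p = p_0$ one gets $s(p_0) = \bigl(1 - \frac{p_0}{2}\bigr)\cdot\frac{1}{p_0 - 1} = \frac{2 - p_0}{2(p_0-1)}$, and at $p = p_0'$ one gets $s(p_0') = \bigl(1 - \frac{p_0'}{2}\bigr)\cdot\frac{p_0}{p_0'(p_0-1)}$; using $p_0' = \frac{p_0}{p_0-1}$ these simplify, and plugging in $p_0 = p_M' = \bigl(2(\frac{\pi}{2\arctan M}+1)\bigr)' = \frac{2(\frac{\pi}{2\arctan M}+1)}{2(\frac{\pi}{2\arctan M}+1)-1} = \frac{\frac{\pi}{\arctan M}+2}{\frac{\pi}{\arctan M}+1}$ one checks that $|s(p_0)|$ and $|s(p_0')|$ both equal exactly $\frac{\pi}{2\arctan M}$, which are the (excluded) endpoint values. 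Hence for $p$ strictly between $p_M'$ and $p_M$ we have the strict inequality $|s(p)| < \frac{\pi}{2\arctan M}$, so the hypothesis of Theorem \ref{SzegoMain} holds, and the corollary follows.

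The main obstacle is entirely in the first ingredient: pinning down exactly which powers of $|\varphi'|$ lie in $A_2(\partial\mathbb{D})$ for a Lipschitz domain, with the sharp constant $\frac{2\arctan M}{\pi}$. This is where one must invoke the fine theory of conformal maps onto Lipschitz domains — the distortion estimates on $\arg\varphi'$ going back to Warschawski and the characterization of Muckenhoupt weights on the circle that are powers of distances to points. The remaining steps — identifying $p_0$ with $p_M'$, translating the weight hypothesis, and the endpoint arithmetic showing $s(p_M') = \frac{\pi}{2\arctan M}$ — are routine once that input is in hand. I would also remark that the same reasoning, transplanted through Theorem \ref{BergmanMain} and the $A_2^+$ analogue of the power-weight computation (powers $|z-z_0|^a$ with $z_0 \in \partial\mathbb{D}$ lie in $A_2^+(\mathbb{D})$ for an appropriate range of $a$), gives the companion Bergman statement Corollary \ref{BergmanLip}.
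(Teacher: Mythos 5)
Your proposal is correct and follows essentially the same route as the paper: apply Theorem \ref{SzegoMain} with $p_0=p_M'$, reduce everything to the single hypothesis $|\varphi'|^{(1-\frac{p}{2})\frac{p_0}{p(p_0-1)}}\in A_2(\partial\mathbb{D})$, invoke the Lanzani--Stein fact (their Corollary 2.3) that $|\varphi'|^r\in A_2(\partial\mathbb{D})$ for $|r|<\frac{\pi}{2\arctan M}$ on a bounded Lipschitz domain, and check by the endpoint arithmetic (which you did correctly, since $s(p)=p_M(\frac1p-\frac12)$ ranges over $(-\frac{\pi}{2\arctan M},\frac{\pi}{2\arctan M})$ for $p_M'<p<p_M$) that the exponent stays strictly inside that range. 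The only cosmetic difference is that your justification of the key $A_2$ input leans on a corner-type power heuristic, whereas the sharp statement for general Lipschitz boundaries comes from the bound on $\arg\varphi'$ and a Helson--Szeg\H{o}-type argument in \cite{LS1}, which is exactly the reference you (and the paper) cite.
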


\begin{cor}\label{BergmanGraph} Let $\Omega$ be a bounded, simply connected local graph domain. Then the Bergman projection $\mathcal{B}$ is bounded on $L^p_\omega( \Omega)$ for $\frac{4}{3}<p<4$ and all $\omega$ satisfying $(\omega \circ \varphi) \in A^{+}_{\frac{3p}{4}}( \mathbb{D}) \cap \text{RH}_{\left(\frac{4}{p}\right)'}(\mathbb{D})$.
\end{cor}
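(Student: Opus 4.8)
The plan is to derive Corollary \ref{BergmanGraph} as a direct application of Theorem \ref{BergmanMain} with the specific choice $p_0 = \frac{4}{3}$, so that $p_0' = 4$ and the conclusion range $p_0 < p < p_0'$ becomes $\frac{4}{3} < p < 4$. With this $p_0$ one checks $\frac{p}{p_0} = \frac{3p}{4}$ and $\left(\frac{p_0'}{p}\right)' = \left(\frac{4}{p}\right)'$, so the stated weight hypothesis $(\omega\circ\varphi) \in A^+_{3p/4}(\mathbb{D}) \cap \mathrm{RH}_{(4/p)'}(\mathbb{D})$ is exactly the weight hypothesis of Theorem \ref{BergmanMain}. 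Thus the only thing that actually needs to be verified is the geometric hypothesis: that the Riemann map $\varphi:\mathbb{D}\to\Omega$ for a bounded, simply connected local graph domain satisfies $|\varphi'|^{(2-p)\left(\frac{p_0}{p(p_0-1)}\right)} \in A^+_2(\mathbb{D})$ for all $p$ in the range, i.e. with $p_0 = \frac{4}{3}$ the exponent $\frac{p_0}{p(p_0-1)} = \frac{4}{p}$, so we must show $|\varphi'|^{(2-p)\cdot\frac{4}{p}} = |\varphi'|^{\frac{8}{p}-4} \in A^+_2(\mathbb{D})$.

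First I would recall (citing \cite{LS1}) the known structural fact that for a local graph domain the derivative of the Riemann map and its reciprocal lie in $H^s$-type spaces with a definite exponent — more precisely, the key input from Lanzani–Stein is that $\varphi'$ and $1/\varphi'$ satisfy integrability/Muckenhoupt-type bounds strong enough that $|\varphi'|^{\pm\beta}$ has good behavior for $\beta$ below a threshold governed by the "$\frac{4}{3} < p < 4$" range. Concretely, the exponent $t(p) := \frac{8}{p} - 4$ ranges over $(-2,2)$ as $p$ ranges over $(\frac{4}{3},4)$, and the claim is that $|\varphi'|^{t} \in A_2^+(\mathbb{D})$ for all $t \in (-2,2)$; by the $A_2^+$ duality symmetry $[\omega]_{A_2^+} = [\omega^{-1}]_{A_2^+}$ it suffices to treat $t \in [0,2)$. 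Then I would invoke the conformal-mapping estimates for local graph domains from \cite{LS1} (the same estimates that yield their unweighted $L^p$ boundedness in the range $\frac{4}{3}<p<4$) together with the self-improvement / openness of the $A_2^+$ condition to conclude membership.

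The key steps, in order, are: (1) set $p_0 = \frac{4}{3}$ and mechanically verify that all the indices in Theorem \ref{BergmanMain} specialize to those in the corollary; (2) compute the exponent $(2-p)\frac{p_0}{p(p_0-1)} = \frac{8}{p}-4$ and observe it lies in $(-2,2)$; (3) quote from \cite{LS1} the sharp fact that for a bounded simply connected local graph domain, $|\varphi'|^{t}$ is an $A_2^+(\mathbb{D})$ weight for every $t\in(-2,2)$ — equivalently, after pulling back via $z = \varphi^{-1}$, that the Bergman projection on $\Omega$ is $L^p$-bounded precisely for $\frac43 < p < 4$, which by Lemma \ref{TransferDiscBergman} says $|\varphi'|^{2-p}\in A_p^+$ and hence (taking $\omega \equiv 1$ in a neighborhood of the exponent) gives the needed range; (4) apply Theorem \ref{BergmanMain} to finish. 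I would present step (3) carefully since it is the one substantive point: one must be sure the local graph hypothesis is exactly what forces the threshold to be $2$ (so that the full open range $(-2,2)$ of exponents is available), matching the open range $(\frac43,4)$ of $p$.

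The main obstacle I anticipate is step (3): making precise and correctly attributing the statement "$|\varphi'|^t \in A_2^+(\mathbb{D})$ for $t\in(-2,2)$" for local graph domains. The subtlety is that \cite{LS1} phrase their results in terms of $L^p$ boundedness of the projections rather than directly in terms of $A_2^+$ membership of powers of $|\varphi'|$, so one needs the translation via Lemma \ref{TransferDiscBergman} plus the self-improvement of Muckenhoupt classes (if $|\varphi'|^{2-p}\in A_p^+$ on an open $p$-interval then the relevant powers sit in $A_2^+$) to extract exactly the exponent range $(-2,2)$. One must also confirm that near the endpoints $p\to\frac43^+$ and $p\to 4^-$ the construction still works — i.e. that the local graph condition genuinely yields the open interval and not merely a proper subinterval — so that the corollary's stated range $\frac43<p<4$ is justified and not just $p$ in some smaller symmetric interval.
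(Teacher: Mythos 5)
Your reduction to Theorem \ref{BergmanMain} with $p_0=\tfrac43$ and the index bookkeeping in steps (1)--(2) (including the exponent $(2-p)\tfrac{p_0}{p(p_0-1)}=\tfrac{8}{p}-4\in(-2,2)$) match the paper. The gap is in your step (3), which is exactly the substantive point. You propose to obtain $|\varphi'|^{t}\in A_2^{+}(\mathbb{D})$ for all $t\in(-2,2)$ from the known unweighted $L^p(\Omega)$ boundedness of $\mathcal{B}$ on the range $\tfrac43<p<4$, via Lemma \ref{TransferDiscBergman} and ``self-improvement of Muckenhoupt classes.'' This does not work. Lemma \ref{TransferDiscBergman} converts that boundedness into the family of conditions $|\varphi'|^{2-p}\in A_p^{+}(\mathbb{D})$, $\tfrac43<p<4$, i.e.\ $A_p^{+}$ conditions with $p$ varying together with the exponent; general tools (the inclusion $A_q^{+}\subset A_p^{+}$ for $q<p$, and the duality $\omega\in A_p^{+}\iff\omega^{-1/(p-1)}\in A_{p'}^{+}$) only extract from this that $|\varphi'|^{t}\in A_2^{+}(\mathbb{D})$ for roughly $|t|<\tfrac23$, far short of the full range $(-2,2)$ needed when $p$ is near the endpoints. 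Worse, the route is circular: the $L^p$ boundedness of $\mathcal{B}$ on local graph domains in the range $(\tfrac43,4)$ is itself proved (in \cite{LS1}, and implicitly here) precisely by establishing the $A_p^{+}$ membership of $|\varphi'|^{2-p}$, so it cannot be used as the input that certifies the $A_2^{+}$ hypothesis of Theorem \ref{BergmanMain}.

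What the paper actually does at this point is different and is the ingredient you are missing: it works on the boundary first and then transfers to the disc with a doubling of the exponent. Namely, (a) by \cite[Proposition 2.2]{LS1}, for a local graph domain $|\varphi'|^{r}\in A_2(\partial\mathbb{D})$ for $-1<r<1$; (b) by \cite[Proposition 4.1]{LS1}, $\varphi'$ is an outer function; and (c) by B\'ekoll\`e's theorem \cite[Theorem 1.2]{Bek1}, if $\varphi'$ is outer and $|\varphi'(e^{\ii\theta})|^{\alpha}\in A_p(\partial\mathbb{D})$ then $|\varphi'(z)|^{2\alpha}\in A_p^{+}(\mathbb{D})$. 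Taking $\alpha=\tfrac4p-2\in(-1,1)$ gives exactly $|\varphi'|^{\frac8p-4}\in A_2^{+}(\mathbb{D})$ for the whole range $\tfrac43<p<4$; the factor $2$ in B\'ekoll\`e's transfer is what turns the circle range $(-1,1)$ into the disc range $(-2,2)$. Without invoking the outer-function property and this transfer theorem (or some equivalent interior estimate on $|\varphi'|$), your argument does not justify the hypothesis of Theorem \ref{BergmanMain} on the full stated range of $p$.
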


\begin{cor}\label{BergmanLip} Let $\Omega$ be a bounded Lipschitz domain with Lipschitz constant $M$ and let $p_M=2(\frac{\pi}{2 \arctan{M}}+1)$. Then the Bergman projection $\mathcal{B}$ is bounded on $L^p_\omega( \Omega)$ for $p_M'<p<p_M$ and all $\omega$ satisfying $(\omega \circ \varphi) \in A^{+}_{\frac{p}{p_M'}}(\mathbb{D}) \cap \text{RH}_{\left(\frac{p_M}{p}\right)'}(\mathbb{D})$.
\end{cor}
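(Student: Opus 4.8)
The plan is to specialize Theorem \ref{BergmanMain} to a bounded Lipschitz domain $\Omega$ with Lipschitz constant $M$, exactly paralleling the argument that yields Corollary \ref{BergmanGraph} from the same theorem and mirroring the Szeg\H{o}-side Corollary \ref{SzegoLip}. The first step is to identify the correct value of $p_0$: by the Lanzani--Stein analysis in \cite{LS1}, the Bergman (and Szeg\H{o}) projection on a Lipschitz domain with constant $M$ is bounded on the unweighted $L^p$ range $p_M' < p < p_M$ with $p_M = 2\left(\frac{\pi}{2\arctan M} + 1\right)$, so we take $p_0 = p_M'$ and note $p_0' = p_M$. With this choice the weight-class conditions in Theorem \ref{BergmanMain} become $\omega \circ \varphi \in A^{+}_{p/p_M'}(\mathbb{D}) \cap \text{RH}_{(p_M/p)'}(\mathbb{D})$, which is precisely the hypothesis in the statement.

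The second step is to verify the structural hypothesis of Theorem \ref{BergmanMain}, namely that $|\varphi'|^{(2-p)\left(\frac{p_0}{p(p_0-1)}\right)} \in A_2^{+}(\mathbb{D})$ for every $p$ in the range $p_M' < p < p_M$. This is where the Lipschitz geometry enters quantitatively: the Riemann map $\varphi$ onto a Lipschitz domain satisfies $|\varphi'| \in \text{(a known class of weights)}$ on $\mathbb{D}$ — more precisely, $|\varphi'|^{\pm s}$ lies in a Muckenhoupt-type class $A_2^{+}(\mathbb{D})$ for exponents $s$ small enough relative to $\arctan M$, a fact that can be extracted from the boundary behavior of conformal maps onto Lipschitz domains (again following \cite{LS1} and the classical theory of $A_p$ weights arising from powers of $|\varphi'|$; cf. the analogous verification in Corollary \ref{SzegoLip}). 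One then checks that the exponent $(2-p)\frac{p_0}{p(p_0-1)}$ stays within the admissible window for all $p \in (p_M', p_M)$; since this exponent vanishes at $p=2$ and grows in modulus as $p$ approaches either endpoint, the worst case is the endpoint, and the definition of $p_M$ is exactly calibrated so that the borderline exponent corresponds to the borderline $A_2^{+}$ membership. I expect this verification to be the main obstacle, as it requires the precise quantitative relationship between the Lipschitz constant and the integrability exponents of $|\varphi'|^{\pm 1}$, rather than a soft argument.

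Once both the unweighted range and the $A_2^{+}$ membership of $|\varphi'|^{(2-p)\left(\frac{p_0}{p(p_0-1)}\right)}$ are in hand, the conclusion is immediate: Theorem \ref{BergmanMain} applies verbatim with $p_0 = p_M'$, giving boundedness of $\mathcal{B}$ on $L^p_\omega(\Omega)$ for $p_M' < p < p_M$ under the stated weight hypothesis (together with the explicit norm bound, should one wish to record it). In fact, since Proposition \ref{BergmanHolder} and Lemma \ref{TransferDiscBergman} are the only machinery used, the proof is a one-line deduction modulo the geometric input; the same remark explains why the proof can be abbreviated to ``specialize Theorem \ref{BergmanMain} with $p_0 = p_M'$ and invoke the Lipschitz bounds of \cite{LS1}.''
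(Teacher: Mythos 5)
Your proposal is correct and follows essentially the same route as the paper: specialize Theorem \ref{BergmanMain} with $p_0=p_M'$ (so $p_0'=p_M$) and feed in the Lanzani--Stein analysis of $|\varphi'|$ for Lipschitz domains. The one step you leave as a vague appeal to ``boundary behavior of conformal maps'' is carried out in the paper by a specific two-stage mechanism: first, \cite[Corollary 2.3]{LS1} gives the \emph{boundary} membership $|\varphi'|^{(1-\frac{p}{2})\frac{p_0}{p(p_0-1)}}\in A_2(\partial\mathbb{D})$ for all $p$ in the stated range (exactly as in Corollary \ref{SzegoLip}); second, since $\varphi'$ is an outer function \cite[Proposition 4.1]{LS1}, B\'ekoll\`e's theorem \cite[Theorem 1.2]{Bek1} transfers $|\varphi'(e^{\ii\theta})|^{\alpha}\in A_2(\partial\mathbb{D})$ to $|\varphi'(z)|^{2\alpha}\in A_2^{+}(\mathbb{D})$, and the doubling of the exponent is precisely what is needed because $(2-p)\frac{p_0}{p(p_0-1)}=2\left(1-\frac{p}{2}\right)\frac{p_0}{p(p_0-1)}$. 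With that citation made explicit your ``one-line deduction'' matches the paper's proof.
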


All of these corollaries are deduced via Theorem \ref{SzegoMain} or \ref{BergmanMain} in the same way; by verifying that $|\varphi'|^{(1-\frac{p}{2})(\frac{p_0}{p(p_0-1)})}$ (or its square) is an $A_2(\partial \mathbb{D})$ or (respectively, $A_2^{+}(\mathbb{D})$) weight. The details can be found in \cite{LS1}, but we recall relevant facts for completeness. If $\Omega$ is any local graph domain, it is shown in \cite[Proposition 2.2]{LS1} that $|\varphi'|^{r} \in A_2(\partial \mathbb{D})$ for $0\leq r <1$ (and thus actually for $-1 <r<1$). Note if $p_0=\frac{4}{3}$ and $p \in (\frac{4}{3},4)$, we have $(1-\frac{p}{2})(\frac{p_0}{p(p_0-1)})=\frac{4}{p}-2$, and $-1 < \frac{4}{p}-2 < 1$ for $p$ in this range. The argument for when $\Omega$ is a bounded Lipschitz domain is basically identical and follows from \cite[Corollary 2.3]{LS1} . 

On the other hand, for the same domains when considering the Bergman projection, Lanzani and Stein show that $\varphi'$ is an outer function on $\mathbb{D}$ \cite[Proposition 4.1]{LS1}. A result of B\'{e}koll\`{e} \cite[Theorem 1.2]{Bek1}\ is then invoked which states if $\varphi'$ is outer, $\alpha \in \mathbb{R}$, and $|\varphi'(e^{\ii \theta})|^{\alpha} \in A_p(\partial \mathbb{D})$, then $|\varphi'(z)|^{2 \alpha} \in A_p^{+}(\mathbb{D})$. Thus, the two corresponding results for the Bergman projection follow from the arguments above pertaining to the Szeg\H{o} projection.

\begin{rem}
Notice that the conditions on the weight for boundedness in all of these results are, of course, in terms of its pullback to the unit disc. It is unclear whether these weights can be described in a conformal-free way that is intrinsic to the domain $\Omega$, especially in the case of a domain with non-smooth boundary.
\end{rem}

\section{An Upper Bound and A Quantitative Example}\label{Quant}
\subsection{Upper Quantitative Bound}
One can use similar methods as above to obtain quantitative estimates on the norm of the unweighted Szeg\H{o} or Bergman projection. We have the following result:

\begin{theorem} \label{SzegoQuant}
Let $\Omega \subset \mathbb{C}$ be a simply connected, bounded domain with rectifiable boundary. Let $\varphi: \bar{\mathbb{D}} \rightarrow \bar{\Omega}$ be a conformal mapping. The following estimate holds for the Szeg\H{o} projection $\mathcal{S}$:
$$ \| \mathcal{S}\|_{L^p(\partial \Omega) \rightarrow L^p(\partial \Omega)  } \lesssim C_p \left[|\varphi'|^{1-\frac{p}{2}}\right]_{A_p}^{\max\{1,\frac{1}{p-1}\}},$$
where $C_p= \max\{p, \frac{1}{p-1}\}$ and the implicit constant is independent of $p$ and $\Omega.$
\end{theorem}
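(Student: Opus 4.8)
The plan is to reduce the claim to the weighted estimate for the Szeg\H{o} projection on the circle that was already established inside the proof of Theorem \ref{SzegoMain}. Specifically, Lemma \ref{transfer to disc Szego} (applied with the trivial weight $\omega \equiv 1$ on $\partial \Omega$) shows that $\tau_{1/2}$ is a surjective isometry from $L^p(\partial \Omega)$ onto $L^p_{\nu}(\partial \mathbb{D})$ with $\nu(e^{\ii \theta}) = |\varphi'(e^{\ii \theta})|^{1-\frac{p}{2}}$, and that under conjugation by $\tau_{1/2}$ the operator $\mathcal{S}$ on $L^p(\partial \Omega)$ corresponds exactly to $\mathcal{S}_0$ on $L^p_\nu(\partial \mathbb{D})$. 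Hence
\begin{equation*}
\|\mathcal{S}\|_{L^p(\partial \Omega) \to L^p(\partial \Omega)} = \|\mathcal{S}_0\|_{L^p_\nu(\partial \mathbb{D}) \to L^p_\nu(\partial \mathbb{D})}.
\end{equation*}
This identity is the whole content of the ``conformal transfer'' step; there is nothing to prove beyond quoting the lemma, since the equality of operator norms is an immediate consequence of $\tau_{1/2}$ being an isometric isomorphism intertwining the two projections.

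Next I would invoke the quantitative weighted bound for $\mathcal{S}_0$ on the circle that was derived in the course of proving Theorem \ref{SzegoMain}, namely
\begin{equation*}
\|\mathcal{S}_0\|_{L^p_\omega(\partial \mathbb{D}) \to L^p_\omega(\partial \mathbb{D})} \lesssim C_p\, [\omega]_{A_p(\partial \mathbb{D})}^{\max\{1, \frac{1}{p-1}\}}, \qquad C_p = \max\Bigl\{p, \tfrac{1}{p-1}\Bigr\},
\end{equation*}
valid for all $A_p(\partial \mathbb{D})$ weights with implicit constant independent of $p$ and $\omega$. Applying this with $\omega = \nu = |\varphi'|^{1-\frac{p}{2}}$ yields directly
\begin{equation*}
\|\mathcal{S}\|_{L^p(\partial \Omega) \to L^p(\partial \Omega)} \lesssim C_p\, \bigl[\,|\varphi'|^{1-\frac{p}{2}}\,\bigr]_{A_p(\partial \mathbb{D})}^{\max\{1,\frac{1}{p-1}\}},
\end{equation*}
which is the asserted estimate. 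One should note, for the statement to be non-vacuous, that the hypothesis implicitly requires $|\varphi'|^{1-\frac{p}{2}} \in A_p(\partial \mathbb{D})$; when this fails the right-hand side is $+\infty$ and the inequality holds trivially, so no case distinction is really needed.

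In short, the argument is essentially a bookkeeping exercise: combine Lemma \ref{transfer to disc Szego} with the circle estimate extracted from the proof of Theorem \ref{SzegoMain}. There is no genuine obstacle here — all the analytic work (the $A_2$ theorem on spaces of homogeneous type, the sharp maximal function bound, and Rubio de Francia extrapolation with quantitative control of constants) has already been carried out for Theorem \ref{SzegoMain}. The only mild subtlety worth a sentence in the write-up is making explicit that the change of variables $d\sigma(\zeta) = |\varphi'(e^{\ii\theta})|\, d\theta$, together with the factor $|\varphi'|^{p/2}$ coming from $\tau_{1/2}$, produces precisely the weight exponent $1 - \tfrac{p}{2}$; this is exactly the computation displayed in the proof of Lemma \ref{transfer to disc Szego}, so it can simply be cited. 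Thus the proof will be only a few lines long.
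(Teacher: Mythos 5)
Your proposal is correct and follows essentially the same route as the paper: the paper's proof of Theorem \ref{SzegoQuant} likewise cites Lemma \ref{transfer to disc Szego} to get the equality $\|\mathcal{S}\|_{L^p(\partial \Omega) \to L^p(\partial \Omega)}=\|\mathcal{S}_0\|_{L^p_{\omega}(\partial \mathbb{D}) \to L^p_{\omega}(\partial \mathbb{D})}$ with $\omega=|\varphi'|^{1-\frac{p}{2}}$, and then invokes the quantitative weighted bound for $\mathcal{S}_0$ (Calder\'on--Zygmund theory on the circle plus extrapolation) established in the proof of Theorem \ref{SzegoMain}. Your added remark that the estimate is vacuous when $|\varphi'|^{1-\frac{p}{2}}\notin A_p(\partial\mathbb{D})$ is a harmless clarification and does not change the argument.
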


\begin{proof}[Proof of Theorem \ref{SzegoQuant}]
 First, note that the proof of Lemma \ref{transfer to disc Szego} demonstrates that $$\|\mathcal{S}\|_{L^p(\partial \Omega) \rightarrow L^p(\partial \Omega)}=\|\mathcal{S}_0\|_{L^p_{\omega}(\partial \mathbb{D}) \rightarrow L^p_{\omega}(\partial \mathbb{D})},$$ where $\omega=|\varphi'|^{1-\frac{p}{2}}.$ The upper bound then follows from noting that the Szeg\H{o} projection on the unit circle is a Calder\'{o}n-Zygmund operator on a space of homogeneous type as we argued in the proof of Theorem \ref{SzegoMain}. 

\end{proof}

Of course, we have an analogous statement for the Bergman projection, and in fact we can additionally provide a lower estimate in this case. 

\begin{theorem} \label{BergmanQuant}
Let $\Omega \subsetneq \mathbb{C}$ be a simply connected domain. Let $\varphi: \bar{\mathbb{D}} \rightarrow \bar{\Omega}$ be a conformal mapping. The following estimate holds for the Bergman projection $\mathcal{B}$:

$$ \left[|\varphi'|^{2-p}\right]_{A_p^+}^{\frac{1}{2p}} \lesssim \| \mathcal{B}\|_{L^p( \Omega) \rightarrow L^p( \Omega)} \lesssim C_p \left[|\varphi'|^{2-p}\right]_{A_p^+}^{\max\{1,\frac{1}{p-1}\}},$$
where $C_p= \max\{p, \frac{1}{p-1}\}$ and the implicit constants are independent of $p$ and $\Omega.$ 
\begin{proof}
Lemma \ref{TransferDiscBergman} gives that $$\|\mathcal{B}\|_{L^p( \Omega) \rightarrow L^p( \Omega)}=\|\mathcal{B}_0\|_{L^p_{\omega}(\mathbb{D})},$$ where $\omega=|\varphi'|^{2-p}.$ The upper bound then follows from \cite{RTW}*{Theorem 2} or \cite{HWW}*{Theorem 1.2} as before, while the lower bound can be found in \cite[Theorem 1.3]{HWW}.
\end{proof} 
\end{theorem}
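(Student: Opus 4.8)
The plan is to read off the claimed two-sided estimate from Lemma~\ref{TransferDiscBergman} together with the known sharp weighted bounds for the model Bergman projection $\mathcal{B}_0$ on the disc. First I would invoke Lemma~\ref{TransferDiscBergman} in its strongest form: since $\tau$ is a surjective isometry from $L^p_\omega(\Omega)$ onto $L^p_{(\omega\circ\varphi)\nu}(\mathbb{D})$ with $\nu=|\varphi'|^{2-p}$, and since \eqref{Bergman relation} gives $\mathcal{B}=\tau^{-1}\mathcal{B}_0\tau$, conjugation by an isometry preserves operator norms exactly, so
$$\|\mathcal{B}\|_{L^p(\Omega)\to L^p(\Omega)}=\|\mathcal{B}_0\|_{L^p_\omega(\mathbb{D})\to L^p_\omega(\mathbb{D})},\qquad \omega=|\varphi'|^{2-p}$$
(taking the base weight on $\Omega$ to be $1$, which pulls back to exactly $\nu$). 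This reduces everything to a statement about $\mathcal{B}_0$ on a weighted space of the disc, with the weight being the specific power weight $|\varphi'|^{2-p}$.

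Next I would quote the sharp upper weighted bound for $\mathcal{B}_0$. As recorded in the proof of Theorem~\ref{BergmanMain}, \cite{RTW}*{Theorem 2} (with the explicit $p$-dependence made explicit in \cite{HWW}*{Theorem 1.2}) gives, for any $A_p^+(\mathbb{D})$ weight $u$,
$$\|\mathcal{B}_0\|_{L^p_u(\mathbb{D})\to L^p_u(\mathbb{D})}\lesssim C_p\,[u]_{A_p^+}^{\max\{1,\frac1{p-1}\}},\qquad C_p=\max\{p,\tfrac1{p-1}\}.$$
Applying this with $u=\omega=|\varphi'|^{2-p}$ yields precisely the right-hand inequality in the theorem. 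I should note in passing that this requires $|\varphi'|^{2-p}\in A_p^+(\mathbb{D})$; if it is not, the claimed upper bound is vacuously true since the right-hand side is infinite, so no hypothesis beyond simple connectedness is needed.

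For the lower bound I would cite the matching sharpness result \cite[Theorem 1.3]{HWW}, which asserts (again in the notation $b=0$, $P_0=\mathcal{B}_0$, $B_{p,0}=A_p^+$) that the operator norm of $\mathcal{B}_0$ on $L^p_u(\mathbb{D})$ is bounded below by a constant multiple of $[u]_{A_p^+}^{1/(2p)}$. Combined with the isometry identity above and $u=|\varphi'|^{2-p}$, this gives the left-hand inequality $\big[|\varphi'|^{2-p}\big]_{A_p^+}^{1/(2p)}\lesssim\|\mathcal{B}\|_{L^p(\Omega)\to L^p(\Omega)}$, completing the proof.

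The only real subtlety—and the step I would be most careful about—is making sure the isometry identity of Lemma~\ref{TransferDiscBergman} is applied with the correct base weight so that the pulled-back weight on the disc is exactly the power weight $|\varphi'|^{2-p}$ and not some additional factor; this is why one takes $\omega\equiv 1$ on $\Omega$, for which the transfer law produces the pure Jacobian weight $\nu(z)=|\varphi'(z)|^{2-p}$. Everything else is a direct citation of the sharp weighted bounds for $\mathcal{B}_0$ on the disc, so there is no genuine analytic obstacle beyond correctly bookkeeping these weights and exponents.
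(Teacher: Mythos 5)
Your proposal is correct and follows essentially the same route as the paper: transfer the norm exactly via the isometry of Lemma \ref{TransferDiscBergman} (with base weight $1$ on $\Omega$, pulling back to $|\varphi'|^{2-p}$), then cite \cite{RTW}*{Theorem 2}/\cite{HWW}*{Theorem 1.2} for the upper weighted bound and \cite{HWW}*{Theorem 1.3} for the lower bound $[u]_{A_p^+}^{1/(2p)}\lesssim\|\mathcal{B}_0\|_{L^p_u(\mathbb{D})}$. Your added remarks on the vacuous case and the weight bookkeeping are fine but do not change the argument.
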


\subsection{An Example} \label{example}
We now provide an example of a domain in the plane where the upper bound of the $L^p$ operator norm of the Szeg\H{o} projection can be estimated directly, and in fact a lower bound may be computed as well. This example is inspired by a family of examples in \cite{LS1}. Let $\mathbb{R}_{+}^{2}$ denote the upper half-plane in $\mathbb{C}.$ Consider the following planar domains:

\begin{align*}
\Omega_1= \Phi_1(\mathbb{R}_{+}^{2}), \Phi_1(z)=z^{3/2} e^{\frac{-\ii \pi}{4}} -\ii; & \hspace{0.4 cm} \Omega_2=\Phi_2(\mathbb{R}_{+}^{2}), \Phi_2(z)=-z^{3/2} e^{\frac{-\ii \pi}{4}}+\ii ; \\
\Omega_3=\Phi_3(\mathbb{R}_{+}^{2}), \Phi_3(z)=z^{1/2} e^{\frac{- \ii \pi}{4}}-2 ; & \hspace{0.4 cm} \Omega_4=\Phi_4(\mathbb{R}_{+}^{2}), \Phi_4(z)=-z^{1/2} e^{\frac{- \ii \pi}{4}}+2 ;\\
\Omega_5= \Phi_5(\mathbb{R}_{+}^{2}), \Phi_5(z)=z^{1/2} e^{\frac{ \ii \pi}{4}}-\left(\frac{1}{2}+\frac{3}{2}\ii\right); & \hspace{0.4 cm}
\Omega_6= \Phi_6(\mathbb{R}_{+}^{2}), \Phi_6(z)=z^{1/2} e^{\frac{ \ii \pi}{4}}+\left(\frac{1}{2}-\frac{3}{2}\ii\right);\\
\Omega_7= \Phi_7(\mathbb{R}_{+}^{2}), \Phi_7(z)=-z^{1/2} e^{\frac{ \ii \pi}{4}}+\left(-\frac{1}{2}+\frac{3}{2}\ii\right); & \hspace{0.4 cm}
\Omega_8= \Phi_8(\mathbb{R}_{+}^{2}), \Phi_8(z)=-z^{1/2} e^{\frac{ \ii \pi}{4}}+\left(\frac{1}{2}+\frac{3}{2}\ii\right).
\end{align*}

Then, let $\Omega= \bigcap_{j=1}^{8} \Omega_j$ (this is the same domain as $\Omega= \bigcap_{j=1}^{4} \Omega_j$, but the eight domains are needed to describe the boundary of $\Omega$ locally as graphs of functions). It is clear $\Omega$ is a local graph domain with corresponding graph domains $\Omega_j$, $1 \leq j \leq 8.$ We provide a picture of this domain created using Mathematica below (we note the green color corresponds to $\partial \Omega_3$, the blue corresponds to $\partial \Omega_1$, the red corresponds to $\partial \Omega_4$, and the orange corresponds to $\partial \Omega_2$):

\vspace{0.4 cm}

\begin{center}

\includegraphics{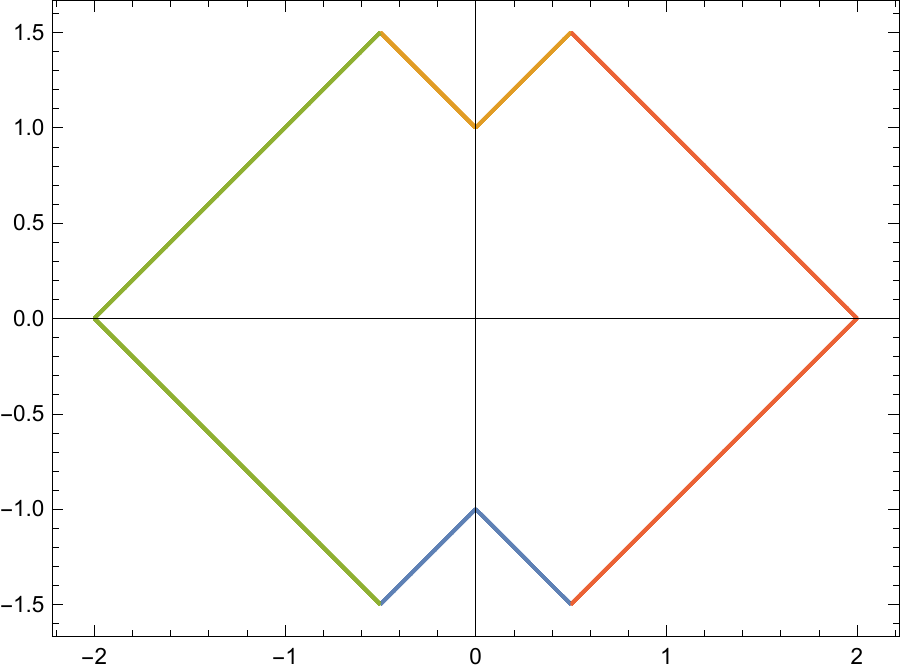}

\end{center}

\vspace{0.4 cm}

It is obvious that $\Omega$ is a Lipschitz domain and it is easy to see that the Lipschitz constant is $1$. We note that the points $(0,1)$ and $(0,-1)$ in $\mathbb{R}^2= \mathbb{C}$ are distinguished because the domain is not locally convex at those points. We know from previous work that since $M=1$, $\mathcal{S}$ is bounded on $L^p(\Omega)$ for $\frac{6}{5}<p<6$. The arguments in \cite{LS1} establish that $\mathcal{S}$ is in fact bounded only in this p range, but the authors do not provide quantitative estimates on the $L^p(\Omega)$ operator norm of $\mathcal{S}.$ Here we provide such estimates for the operator norm. 

Let $\varphi: \bar{\mathbb{D}} \rightarrow \bar{\Omega}$ be a conformal map. We wish to work via the conformal maps on the upper half space defined above. In particular, we remark that a conformal map $\Psi:\mathbb{R}_{+}^2 \rightarrow \Omega $ which fixes the point at infinity has non-tangential limits on the boundary of $\mathbb{R}_{+}^{2}$ almost everywhere. We now proceed to compute the $A_p(\partial \mathbb{D})$ characteristic of $|\varphi'|^{1-\frac{p}{2}}$ using the following proposition, which is a quantitative sharpening of \cite{LS1}*{Proposition 2.4}.
\begin{prop}\label{upper half plane to disc}
Let $\Omega$ be a local graph domain with corresponding graph domains $\Omega_1,\dots, \Omega_n$ and conformal maps $\Phi_j: \mathbb{R}_{+}^{2} \rightarrow \Omega_j$ which fix the point at $\infty.$ Let $1<p< \infty$ and suppose $|\Phi_j'(x)|^r \in A_p( \mathbb{R})$ for some $r>0$ and all $1 \leq j \leq n$. Let $\varphi:\bar{\mathbb{D}} \rightarrow \bar {\Omega}$ be a conformal map. Then $|\varphi'(e^{\ii \theta})|^r\in A_p(\partial \mathbb{D})$, and moreover:

$$ \left[ |\varphi'|^r\right]_{A_p(\partial \mathbb{D})} \lesssim C_\Omega^p \max\{N_1,N_2 \}$$
where $$N_1= \max_{1 \leq j \leq n} \left[|\Phi'_j|^r\right]_{A_p(\mathbb{R})}$$ and
$$N_2= \int_{\partial \mathbb{D} } |\varphi'|^r \mathop{d \tilde{\theta}} \left(\int_{ \partial \mathbb{D}} |\varphi'|^{-\frac{r}{p-1}}\mathop{d \tilde{\theta}} \right)^{p-1}.$$
Here $C_\Omega$ is a constant that only depends on $\Omega$.
\begin{proof}
The proof is essentially contained in \cite{LS1}, so we omit some of the details. Let $\psi_j= \varphi^{-1} \circ \Phi_j$. The main gist of the argument is that it is possible to choose a finite covering $I_1,\dots, I_n$ of $\partial \mathbb{D}$ together with intervals on $\mathbb{R}$ (identified as the boundary of $\mathbb{R}_{+}^{2}$), $J_j=\psi_j^{-1}(I_j)$, so that $|\psi'_j|$ is bounded above and below in a neighborhood of $ J_j$ and consequently $|\Phi_j'(z)| \approx |\varphi'(\psi_j(z))|$ for $z \in J_j$. Since there are finitely many such intervals $I_j$ and $|\Phi'_j|^r \in A_p(\mathbb{R})$ for all $j$, it is clear that $|\varphi'|^r$ and $|\varphi'|^{\frac{-r}{p-1}}$ belong to $L^1(\partial \mathbb{D})$. 

It is possible to choose $\delta>0$ small enough so that if $|I|< \delta$, then $I \subseteq I_j$ for some $j \in \{1,\cdots, n\}$ (this is just the Lebesgue number lemma applied to $\{I_j\}$). The parameter $\delta$ will depend on the underlying domain $\Omega.$ We then can estimate using change of variables:
\begin{align*}
\frac{1}{|I|}\int_{I} |\varphi'|^r \mathop{d \tilde{\theta}}\left(\frac{1}{|I|}\int_{I} |\varphi'|^{-\frac{r}{p-1}}\mathop{d \tilde{\theta}} \right)^{p-1} &
\approx \frac{1}{|\psi_j^{-1}(I)|}\int_{\psi_j^{-1}(I)} |\varphi' \circ \psi_j|^r |\psi_j'| \mathop{d x}\left(\frac{1}{|\psi_j^{-1}(I)|}\int_{\psi_j^{-1}(I)} |\varphi' \circ \psi_j|^{-\frac{r}{p-1}}|\psi_j'|\mathop{d x} \right)^{p-1}\\
& \approx \frac{1}{|\psi_j^{-1}(I)|}\int_{\psi_j^{-1}(I)} |\Phi_j'|^r \mathop{d x}\left(\frac{1}{|\psi_j^{-1}(I)|}\int_{\psi_j^{-1}(I)} |\Phi_j'|^{-\frac{r}{p-1}}\mathop{d x} \right)^{p-1}\\
& \leq N_1.
\end{align*}
All the implicit constants above evidently only depend on $\Omega.$

If $|I| \geq \delta$, then we estimate as follows:
\begin{align*}
\frac{1}{|I|}\int_{I} |\varphi'|^r \mathop{d \tilde{\theta}}\left(\frac{1}{|I|}\int_{I} |\varphi'|^{-\frac{r}{p-1}}\mathop{d \tilde{\theta}} \right)^{p-1} & \leq \frac{1}{\delta^p}\int_{\partial \mathbb{D}} |\varphi'|^r \mathop{d \theta}\left(\int_{\partial \mathbb{D}} |\varphi'|^{-\frac{r}{p-1}}\mathop{d \theta} \right)^{p-1} \\
& \leq \left(\frac{1}{\delta}\right)^p N_2.
\end{align*}
This completes the proof.

\end{proof}
\end{prop}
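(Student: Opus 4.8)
The plan is to establish Proposition \ref{upper half plane to disc} by adapting the covering argument from \cite{LS1}*{Proposition 2.4} while carefully tracking constants. First I would fix the finite covering $I_1,\dots,I_n$ of $\partial\mathbb{D}$ and the associated intervals $J_j=\psi_j^{-1}(I_j)\subset\mathbb{R}$, where $\psi_j=\varphi^{-1}\circ\Phi_j$; the key geometric input, which is established in \cite{LS1}, is that $|\psi_j'|$ is comparable to constants (depending only on $\Omega$) on a neighborhood of $J_j$, so that $|\Phi_j'(x)|\approx|\varphi'(\psi_j(x))|$ there. Since there are finitely many $I_j$ and each $|\Phi_j'|^r\in A_p(\mathbb{R})$ (so in particular $|\Phi_j'|^r$ and $|\Phi_j'|^{-r/(p-1)}$ are locally integrable), pulling back shows $|\varphi'|^r,|\varphi'|^{-r/(p-1)}\in L^1(\partial\mathbb{D})$, so $N_2$ is finite.

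Next I would split the supremum defining $[|\varphi'|^r]_{A_p(\partial\mathbb{D})}$ into small arcs and large arcs. By the Lebesgue number lemma applied to $\{I_j\}$, there is $\delta>0$ (depending only on $\Omega$) so that every arc $I$ with $|I|<\delta$ is contained in some $I_j$. For such $I$, a change of variables $\theta\mapsto x$ via $\psi_j$ converts the $A_p(\partial\mathbb{D})$ average quotient over $I$ into the $A_p(\mathbb{R})$ average quotient over $\psi_j^{-1}(I)$, up to multiplicative constants depending only on $\Omega$ coming from the two-sided bound on $|\psi_j'|$ and on the Jacobian of the change of variables in the normalization $d\tilde\theta$; this is bounded by $N_1$. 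For arcs $I$ with $|I|\ge\delta$, one uses the crude bound $\frac{1}{|I|}\le\frac{1}{\delta}$ on each of the two factors (the second raised to the $p-1$ power contributing $\delta^{-(p-1)}$, the first contributing $\delta^{-1}$, for a total of $\delta^{-p}$) and enlarges the domain of integration from $I$ to all of $\partial\mathbb{D}$, which produces exactly $\delta^{-p}N_2$. Taking the supremum over all arcs and collecting the $\Omega$-dependent constants into $C_\Omega^p$ gives the claimed bound $[|\varphi'|^r]_{A_p(\partial\mathbb{D})}\lesssim C_\Omega^p\max\{N_1,N_2\}$.

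The main obstacle is the bookkeeping of $p$-dependence: one must check that the constants arising from the two-sided bounds on $|\psi_j'|$ enter the $A_p$ quotient only through fixed powers that can be absorbed into $C_\Omega^p$ (the exponent $p$ appearing because the second average is raised to the power $p-1$), rather than in some way that degenerates as $p\to1$ or $p\to\infty$. Concretely, if $c\le|\psi_j'|\le C$ on a neighborhood of $J_j$, then the change of variables introduces factors like $(C/c)$ and $(C/c)^{p-1}$, whose product is $(C/c)^p$ — this is why the bound is stated with $C_\Omega^p$ and not merely $C_\Omega$. I would also note that the number $n$ of charts and the parameter $\delta$ depend only on $\Omega$, so all such quantities are legitimately folded into $C_\Omega$. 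Everything else — the change of variables formulas, the identification $\mathbb{R}=\partial\mathbb{R}_+^2$, and the finiteness of $L^1$ norms — is routine and can be cited from \cite{LS1}.
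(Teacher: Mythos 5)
Your proposal is correct and follows essentially the same route as the paper's own proof: the same covering $\{I_j\}$ with $\psi_j=\varphi^{-1}\circ\Phi_j$ and the two-sided bounds on $|\psi_j'|$ cited from \cite{LS1}, the same Lebesgue-number-lemma split into arcs $|I|<\delta$ (handled by change of variables and bounded by $N_1$) and $|I|\ge\delta$ (handled by the crude bound $\delta^{-p}N_2$). Your explicit bookkeeping of how the comparability constants enter as $(C/c)$ and $(C/c)^{p-1}$, multiplying to the $C_\Omega^p$ in the statement, is a slightly more careful accounting of a point the paper leaves implicit, but it is not a different argument.
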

We now compute the $A_p(\mathbb{R})$ characteristics of $|\Phi_1'(x)|^{1-\frac{p}{2}}=\frac{3}{2}|x|^{\frac{2-p}{4}}$ and $|\Phi_3'(x)|=\frac{1}{2}|x|^{\frac{p-2}{4}},$ which are the relevant derivatives for the domain $\Omega$ introduced at the beginning of the section. The computations for the other derivatives are obviously identical so we omit them.

First, we compute the following average for $I$ an interval centered at $0$ with length $\delta$:
\begin{eqnarray*}
\frac{1}{|I|}\int_{I} |x|^{\frac{2-p}{4}} \mathop{d x} & \approx & \frac{1}{\delta} \int_{0}^{\delta} x^{\frac{2-p}{4}} \mathop{d x}\\
& = & \delta^{\frac{2-p}{4}} \left(\frac{4}{6-p}\right)
\end{eqnarray*}
and on the other hand
\begin{eqnarray*}
\left( \frac{1}{|I|}\int_{I} |x|^{\frac{p-2}{4(p-1)}} \mathop{d x} \right)^{p-1} & \approx & \left(\frac{1}{\delta} \int_{0}^{\delta} x^{\frac{p-2}{4(p-1)}} \mathop{d x}\right)^{p-1}\\
& = & \delta^{\frac{p-2}{4}} \left(\frac{4(p-1)}{5p-6}\right)^{p-1}.
\end{eqnarray*}
The product of these two factors gives
$$\left(\frac{4}{6-p}\right) \left(\frac{4(p-1)}{5p-6}\right)^{p-1}.$$

If $I$ is not centered at the origin, we consider two cases. Let $x_0$ denote the center of $I$ and $R$ the radius. First suppose $|x_0|<2R$. Then $I \subset 3I'$, where $I'$ is the translate of $I$ centered at $0$. Note $I$ and $3I'$ have comparable measure, so we can just apply the computation above to deduce

$$\left(\frac{1}{|I|}\int_{I} |x|^{\frac{2-p}{4}} \mathop{d x}\right)\left( \frac{1}{|I|}\int_{I} |x|^{\frac{p-2}{4(p-1)}} \mathop{d x} \right)^{p-1} \lesssim \left( \frac{4}{6-p} \right) \left(\frac{4(p-1)}{5p-6}\right)^{p-1}.$$
In the other case, $I$ is such that  $|x_0| \geq 2R$. In this case, if $x \in I$, $|x| \approx R$. It is then clear

$$\left(\frac{1}{|I|}\int_{I} |x|^{\frac{2-p}{4}} \mathop{d x}\right)\left( \frac{1}{|I|}\int_{I} |x|^{\frac{p-2}{4(p-1)}} \mathop{d x} \right)^{p-1} \approx 1.$$
 
It follows from these calculations that $$\left[|\Phi_1'|^{1-\frac{p}{2}}\right]_{A_p(\mathbb{R})} \approx \left(\frac{1}{6-p}\right) \left(\frac{4(p-1)}{5p-6}\right)^{p-1}.$$ Similar calculations may be employed to deduce $$\left[|\Phi_3'|^{1-\frac{p}{2}}\right]_{A_p(\mathbb{R})} \approx \left( \frac{1}{p+2} \right) \left(\frac{4(p-1)}{3p-2}\right)^{p-1}.$$ Note that the first characteristic blows up at the endpoints $p=\frac{6}{5}$ and $p=6$ while the second one does not. In fact, it is easy to see that there exists $C>0$ so that  

\begin{equation} \sup_{I} \fint |\Phi_3'|^{1-p/2}\, dx \leq C; \quad \sup_{I} \left(\fint |\Phi_3'|^{\frac{p/2-1}{p-1}}\, dx\right)^{p-1} \leq C \label{FactorsUniform} \end{equation}

The crucial behavior of the weight evidently occurs at the points $p_1=\varphi^{-1}(\ii)$ and $p_2=\varphi^{-1}(-\ii)$.  Thus, we may conclude using \eqref{FactorsUniform} together with the arguments in Proposition \ref{upper half plane to disc} that $$\left[ |\varphi'|^{1-\frac{p}{2}}\right]_{A_p(\partial \mathbb{D})} \approx \left( \frac{1}{6-p} \right)\left(\frac{4(p-1)}{5p-6}\right)^{p-1}.$$ 
Note that is it not difficult to see the lower bound by testing on small intervals centered at $p_1$ or $p_2.$

With this information, we can obtain upper and lower bounds the norm of the Szeg\H{o} projection in this case. First, we will need a preliminary lemma. 

\begin{lemma} \label{ball separation}
Suppose $r< \frac{2}{5}$. If $I_1=I_1(p_1,r)$ is an arc centered at $p_1 \in \partial \mathbb{D}$, then there exists a point $p_2 \in \partial \mathbb{D}$ and a disjoint arc $I_2=I_2(p_2,r)$ centered at $p_2$ so that for for $j \in \{1,2\}$ and any function $f>0$ supported in $I_j$ and $z \in I_k$ with $j \neq k$ we have the estimate:
$$|\mathcal{S}_0f(z)| \geq \frac{1}{28 \pi } \fint_{I_j} f \, d \tilde{\theta}.$$

\begin{proof}
Let $S_0(z,w)=\frac{1}{1-\overline{w}z}$ denote the kernel of $\mathcal{S}_0$ with respect to the measure $d \tilde{\theta}.$ An easy computation shows that $S_0$ satisfies the smoothness estimate
\begin{equation} |S_0(z,w)-S_0(z,w')| \leq \frac{3}{2} \frac{|w-w'|}{|w-z|^2}
\label{SzegoSmoothness} \end{equation}
 provided $|w-w'| \leq \frac{1}{3} |z-w|$. It suffices to prove the inequality for $j=1.$ Now, let  $w'= p_1$ as above. Select $p_2$ so that if $z \in I_2$ and $w \in I_1$, then $ 7r \geq |z-w| \geq 3 r \geq 3|w-w'|$ (this is possible if $r< \frac{2}{5}$). Then we compute, using \eqref{SzegoSmoothness} and noting that $z \notin \text{supp}(f)$:
\begin{eqnarray*}
|\mathcal{S}_0 f(z)| & = & \left| \int_{I_1} S_0(z,w) f(w) \mathop{d \tilde{\theta}(w)}\right|\\
& \geq& \left| \int_{I_1} S_0(z,w') f(w) \mathop{d \tilde{\theta}(w)} \right| - \left| \int_{I_1}[S_0(z,w)-S_0(z,w')] f(w) \mathop{d \tilde{\theta}(w)} \right| \\
& \geq & \frac{1}{|z-w'|}\int_{I_1} f(w) \mathop{d \tilde{\theta}(w)} - \int_{I_1}|S_0(z,w)-S_0(z,w')| f(w) \mathop{d \tilde{\theta}(w)}\\
& \geq & \frac{3}{4|z-w|}\int_{I_1} f(w) \mathop{d \tilde{\theta}(w)} - \int_{I_1}\frac{3}{2}  \frac{|w-w'|}{|w-z|^2} f(w) \mathop{d \tilde{\theta}(w)}\\
& \geq & \frac{1}{4 |z-w|}\int_{I_1} f(w) \mathop{d \tilde{\theta}(w)}\\
& \geq & \frac{1}{28 \pi} \fint_{I_1} f \, d \tilde{\theta},
\end{eqnarray*}
as required.

\end{proof}

\end{lemma}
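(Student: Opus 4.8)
The plan is to prove Lemma \ref{ball separation} by first establishing the kernel smoothness estimate \eqref{SzegoSmoothness}, then carefully choosing the companion arc $I_2$, and finally running a triangle-inequality argument that isolates the main term $S_0(z,w') = 1/(1-\overline{w'}z)$ and controls the oscillation term via \eqref{SzegoSmoothness}. First I would verify the smoothness bound: writing $S_0(z,w) - S_0(z,w') = \frac{\overline{w}z - \overline{w'}z}{(1-\overline{w}z)(1-\overline{w'}z)}$, so that $|S_0(z,w) - S_0(z,w')| = \frac{|w-w'|}{|1-\overline{w}z||1-\overline{w'}z|}$. Since all points lie on $\partial\mathbb{D}$, $|1-\overline{w}z| = |z-w|$ and $|1-\overline{w'}z| = |z-w'| \geq |z-w| - |w-w'| \geq \frac{2}{3}|z-w|$ under the hypothesis $|w-w'| \leq \frac{1}{3}|z-w|$, which gives the constant $\frac{3}{2}$ in \eqref{SzegoSmoothness}.

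Next I would make the geometric choice of $I_2$. The point is that, given $I_1 = I_1(p_1,r)$ with $r < \frac{2}{5}$ (so $r$ is small relative to the circumference $2\pi$), one can place $p_2$ on $\partial\mathbb{D}$ so that for all $z \in I_2(p_2,r)$ and $w \in I_1(p_1,r)$ the separation satisfies $3r \leq |z-w| \leq 7r$; concretely, taking $p_2$ at arc-distance roughly $5r$ from $p_1$ works, since then Euclidean distances between points of the two arcs range between $3r$ (nearest endpoints) and $7r$ (farthest endpoints), using that chord length is comparable to arc length at this scale. With $w' = p_1$, we have $|w - w'| \leq r \leq \frac{1}{3}|z-w|$, so \eqref{SzegoSmoothness} applies, and moreover $\frac{3}{4}|z-w| \leq |z-w'| \leq \frac{5}{4}|z-w|$ (from $|z-w'| \geq |z-w| - r$ and $|z-w| \leq 7r$, hence $r \leq \frac{1}{3}|z-w|$, giving $|z-w'| \geq \frac{2}{3}|z-w|$ — one should double-check which constant is actually needed to reach $\frac{3}{4}$ in the displayed chain, adjusting the ratio $|z-w|/r$ if necessary).

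Then the estimate chain in the statement is carried out: bound $|\mathcal{S}_0 f(z)|$ below by $\left|\int_{I_1} S_0(z,w')f\,d\tilde\theta\right|$ minus $\int_{I_1}|S_0(z,w)-S_0(z,w')||f|\,d\tilde\theta$; the first term equals $\frac{1}{|z-w'|}\int_{I_1} f\,d\tilde\theta$ since $w' = p_1$ is a fixed point and $f \geq 0$; the second is at most $\int_{I_1} \frac{3}{2}\frac{|w-w'|}{|w-z|^2}f\,d\tilde\theta \leq \frac{3}{2}\cdot\frac{r}{(3r)^2}\int_{I_1} f\,d\tilde\theta = \frac{1}{6r}\int_{I_1} f\,d\tilde\theta$. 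Combining with $|z-w'| \leq 7r$ (so the main term is at least $\frac{1}{7r}\int_{I_1} f\,d\tilde\theta$) leaves at least $\left(\frac{1}{7r} - \frac{1}{6r}\right)$ — which is negative, so the displayed intermediate steps must instead be using the better lower bound $|z-w'| \geq \frac{3}{4}|z-w|$ together with $|w-z| \geq 3r$ to get main term $\geq \frac{1}{|z-w'|}\int f \geq \frac{3}{4|z-w|}\int f$ and oscillation $\leq \frac{1}{2|z-w|}\int f$ only after using $|w-w'|\le r \le |z-w|/3$ more sharply; the net is $\geq \frac{1}{4|z-w|}\int_{I_1} f\,d\tilde\theta \geq \frac{1}{28r}\int_{I_1} f\,d\tilde\theta = \frac{1}{28\pi}\fint_{I_1} f\,d\tilde\theta$, since $|I_1| = 2r/(2\pi) = r/\pi$ in normalized measure. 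The symmetric statement for $j=2$ follows by the same argument with roles swapped, since the construction of the pair $\{I_1,I_2\}$ is symmetric.

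The main obstacle I anticipate is purely bookkeeping with the geometric constants: one must choose the arc-distance between $p_1$ and $p_2$ so that \emph{simultaneously} (i) the hypothesis $|w-w'| \leq \frac{1}{3}|z-w|$ of \eqref{SzegoSmoothness} holds for all admissible $z,w$, (ii) $|z-w'|$ is comparable to $|z-w|$ from below with a good enough constant that the main term dominates the oscillation term after subtraction, and (iii) the lower bound on $|z-w|$ is large enough (a multiple of $r$) that $\frac{1}{|z-w|} \gtrsim \frac{1}{r} \gtrsim 1/|I_1|$. All three are satisfied once $|z-w| \in [3r,7r]$ and $|w-w'| \leq r$, but the precise numerology — verifying that the constant works out to exactly $\frac{1}{28\pi}$, or at worst some explicit absolute constant — requires care, and the condition $r < \frac{2}{5}$ is exactly what guarantees the two arcs of radius $r$ at arc-separation $\sim 5r$ fit disjointly on a circle of circumference $2\pi$ (indeed $2r + 2r + (\text{gap}) < 2\pi$ needs roughly $7r < 2\pi$, i.e.\ $r < \frac{2\pi}{7}$, and $\frac{2}{5} < \frac{2\pi}{7}$). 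Nothing here is deep; it is a matter of fixing the constants consistently.
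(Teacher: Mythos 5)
Your proposal is correct and follows essentially the same route as the paper: verify the kernel smoothness bound with constant $\tfrac{3}{2}$, choose $I_2$ so that $3r \le |z-w| \le 7r$ for all $z \in I_2$, $w \in I_1$, and run the main-term-minus-oscillation estimate, combining the two bounds pointwise in $w$ to reach $\tfrac{1}{4|z-w|} \ge \tfrac{1}{28r}$ and hence $\tfrac{1}{28\pi}\fint_{I_1} f\, d\tilde{\theta}$ (your observation that the crude uniform bounds $\tfrac{1}{7r}-\tfrac{1}{6r}$ give a negative quantity is precisely why the pointwise combination is needed, and is how the paper's chain is to be read). The only slip is in how you state the comparability: what the chain requires is the upper bound $|z-w'| \le |z-w| + |w-w'| \le \tfrac{4}{3}|z-w|$ (so that $\tfrac{1}{|z-w'|} \ge \tfrac{3}{4|z-w|}$), not a lower bound on $|z-w'|$ and not the unjustified factor $\tfrac{5}{4}$; since this follows immediately from $|w-w'| \le r \le \tfrac{1}{3}|z-w|$, which you already use, the argument stands as written.
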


\begin{theorem}\label{SzegoSpecificQuant}
The following estimate holds for the domain $\Omega$ defined at the beginning of Subsection \ref{example}:

$$\left(\frac{1}{6-p}\left(\frac{4(p-1)}{5p-6}\right)^{p-1}\right)^{\frac{1}{2p}} \lesssim \|\mathcal{S}\|_{L^p(\Omega) \rightarrow L^p(\Omega)} \lesssim \left(\frac{1}{6-p}\left(\frac{4(p-1)}{5p-6}\right)^{p-1}\right)^{\max\{1,\frac{1}{p-1}\}}.$$

\begin{proof}
The upper estimate follows from Theorem \ref{SzegoQuant} and the above computation of $\left[ |\varphi'|^{1-\frac{p}{2}}\right]_{A_p(\partial \mathbb{D})}.$ The lower bound follows from a standard argument used for the Hilbert/Riesz transforms, see \cite{G}*{Theorem 7.4.7}. We include this argument for completeness. Let $\omega= |\varphi'|^{1-\frac{p}{2}}$, let $I_1=I_1(\zeta,r)$ be any boundary arc with  $r<\frac{2}{5}$, and let $I_2$ be as in Lemma \ref{ball separation}. First, note that by Lemma \ref{ball separation}, if $f>0$ is supported in $I_1$ we have the set containment

$$ I_2 \subseteq \left\{z \in \partial \mathbb{D}: |\mathcal{S}_0 f(z)|\geq C \fint_{I_1} f \, d \tilde{\theta} \right\}.$$

This then implies

\begin{align*}
\omega(I_2) & \leq \omega \left( \left\{z \in \partial \mathbb{D}: |\mathcal{S}_0 f(z)|\geq C \fint_{I_1} f \, d \tilde{\theta} \right\} \right) \\
& \leq \frac{\| \mathcal{S}_0\|^p_{L^p_\omega(\partial \mathbb{D}) \rightarrow L^p_\omega(\partial \mathbb{D})  }}{C^p \left(\fint_{I_1} f \, d \tilde{\theta}\right)^p} \|f\|^p_{L^p_\omega (\partial \mathbb{D})}\\
& = \frac{\| \mathcal{S}\|^p_{L^p(\partial \Omega) \rightarrow L^p(\partial \Omega)  }}{C^p \left(\fint_{I_1} f \, d \tilde{\theta}\right)^p} \|f\|^p_{L^p_\omega (\partial \mathbb{D})}.
\end{align*}

Assume without loss of generality that $\omega^{\frac{-1}{p-1}}$ is locally integrable (if not, replace $\omega$ by $\omega+\varepsilon$ and use a limiting argument). Then take $f= \omega^{\frac{-1}{p-1}} \chi_{I_1}$ and note that $\|f\|^p_{L^p_\omega (\partial \mathbb{D})}= \int_{I_1} \omega^{-\frac{1}{p-1}} \mathop{d \theta}$ and $|I_1|=|I_2|.$ Using these facts and rearranging the above display, we obtain

$$ \left(\fint_{I_2} \omega \, d \tilde{\theta}\right) \left(\fint_{I_1} \omega^{\frac{-1}{p-1}} \, d \tilde{\theta}\right)^{p-1} \leq \frac{\| \mathcal{S}\|^p_{L^p(\partial \Omega) \rightarrow L^p(\partial \Omega)  }}{C^p }.$$

Interchanging the roles of $I_1$ and $I_2$ , we obtain the analogous inequality:

$$ \left(\fint_{I_1} \omega \, d \tilde{\theta}\right) \left(\fint_{I_2} \omega^{\frac{-1}{p-1}} \, d \tilde{\theta}\right)^{p-1} \leq \frac{\| \mathcal{S}\|^p_{L^p(\partial \Omega) \rightarrow L^p(\partial \Omega)  }}{C^p }.$$

Finally, multiplying these two inequalities together and noting that $\left(\fint_{I_2} \omega \, d \tilde{\theta}\right) \left(\fint_{I_2} \omega^{\frac{-1}{p-1}} \, d\tilde{\theta}\right)^{p-1} \geq 1$ yields 

\begin{align*}
\left(\fint_{I_1} \omega \, d \tilde{\theta}\right) \left(\fint_{I_1} \omega^{\frac{-1}{p-1}}\, d\tilde{\theta}\right)^{p-1} & \leq  \left(\fint_{I_2} \omega \, d \tilde{\theta}\right) \left(\fint_{I_1} \omega^{\frac{-1}{p-1}} \, d\tilde{\theta}\right)^{p-1} \left(\fint_{I_1} \omega \, d \tilde{\theta}\right) \left(\fint_{I_2} \omega^{\frac{-1}{p-1}}d\tilde{\theta}\right)^{p-1} \\ 
& \leq \frac{\| \mathcal{S}\|^{2p}_{L^p(\partial \Omega) \rightarrow L^p(\partial \Omega)  }}{C^{2p} }.
\end{align*}

Rearranging this inequality and taking a supremum over intervals $I_1$ with small arc length then yields the desired result.

\end{proof}

\end{theorem}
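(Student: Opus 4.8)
The plan is to combine the two halves of Theorem \ref{BergmanQuant}'s strategy — or rather, its Szeg\H{o} analogue — in the form already laid out by Theorems \ref{SzegoQuant} and \ref{SzegoMain}. The upper bound requires essentially no new work: by the proof of Lemma \ref{transfer to disc Szego}, $\|\mathcal{S}\|_{L^p(\partial\Omega)\to L^p(\partial\Omega)} = \|\mathcal{S}_0\|_{L^p_\omega(\partial\mathbb{D})\to L^p_\omega(\partial\mathbb{D})}$ with $\omega = |\varphi'|^{1-p/2}$, and by Theorem \ref{SzegoQuant} (equivalently the Calder\'on--Zygmund/extrapolation estimate established in the proof of Theorem \ref{SzegoMain}) this is bounded by $C_p [\omega]_{A_p(\partial\mathbb{D})}^{\max\{1,1/(p-1)\}}$. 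Since the excerpt has already computed $[\,|\varphi'|^{1-p/2}]_{A_p(\partial\mathbb{D})} \approx \frac{1}{6-p}\big(\frac{4(p-1)}{5p-6}\big)^{p-1}$ for this specific $\Omega$, substituting gives the claimed right-hand side immediately. I would state this in one line.

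For the lower bound the plan is to run the standard testing argument for singular integrals against the $A_p$ characteristic, exactly as in \cite{G}*{Theorem 7.4.7}, but using the ``ball separation'' mechanism of Lemma \ref{ball separation} in place of the usual antipodal-point trick (which works on $\mathbb{R}^n$ but needs adaptation on $\partial\mathbb{D}$ where the kernel does not have a clean sign). Concretely: fix an arc $I_1 = I_1(\zeta,r)$ with $r < 2/5$, let $I_2$ be the disjoint arc produced by Lemma \ref{ball separation}, and observe that for $f>0$ supported in $I_1$ the lemma gives the containment $I_2 \subseteq \{z : |\mathcal{S}_0 f(z)| \geq C\fint_{I_1} f\,d\tilde\theta\}$ with $C = \frac{1}{28\pi}$. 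Applying Chebyshev with the weighted $L^p$ bound for $\mathcal{S}_0$, testing on $f = \omega^{-1/(p-1)}\chi_{I_1}$, and using $|I_1| = |I_2|$, one gets $\big(\fint_{I_2}\omega\big)\big(\fint_{I_1}\omega^{-1/(p-1)}\big)^{p-1} \lesssim \|\mathcal{S}\|_{L^p(\partial\Omega)\to L^p(\partial\Omega)}^p / C^p$. Swapping the roles of $I_1$ and $I_2$ gives the companion inequality, and multiplying the two — together with the trivial lower bound $\big(\fint_{I_2}\omega\big)\big(\fint_{I_2}\omega^{-1/(p-1)}\big)^{p-1}\geq 1$ from Jensen — isolates the $A_p$ ratio over $I_1$ and bounds it by $\|\mathcal{S}\|^{2p}/C^{2p}$. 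Taking the supremum over small arcs $I_1$, and recalling that the $A_p$ characteristic of $\omega$ is attained (up to constants) on small arcs centered at $p_1 = \varphi^{-1}(\ii)$ or $p_2 = \varphi^{-1}(-\ii)$ — a point already noted in the excerpt — yields $[\omega]_{A_p(\partial\mathbb{D})} \lesssim \|\mathcal{S}\|_{L^p(\partial\Omega)\to L^p(\partial\Omega)}^{2p}$, i.e. $\|\mathcal{S}\| \gtrsim [\omega]_{A_p}^{1/(2p)}$, which is the left-hand side.

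The one genuinely delicate point — and the step I would single out as the main obstacle — is the restriction $r < 2/5$ in Lemma \ref{ball separation} and its interaction with the supremum defining $[\omega]_{A_p(\partial\mathbb{D})}$. The $A_p$ supremum is a priori over \emph{all} arcs, but the testing argument only produces the bound for arcs of length below a fixed threshold. This is not a real problem here precisely because the blow-up of $[\,|\varphi'|^{1-p/2}]_{A_p}$ as $p \to 6/5^+$ or $p\to 6^-$ comes entirely from the local behavior of $|\varphi'|$ near $p_1$ and $p_2$ (the factors coming from $\Omega_3,\dots$ and the ``smooth'' directions stay uniformly bounded by \eqref{FactorsUniform}), so small arcs centered near $p_1,p_2$ already realize the characteristic up to a $p$-independent constant. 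I would make this explicit: restricting the supremum in the definition of $[\omega]_{A_p}$ to arcs of length $<2/5$ changes it only by a bounded factor, so the testing argument is not lossy in the relevant regime. The remaining ingredients — the Chebyshev step, the $f = \omega^{-1/(p-1)}\chi_{I_1}$ choice, the $\omega + \varepsilon$ regularization to guarantee local integrability of $\omega^{-1/(p-1)}$, and the final rearrangement — are routine and can be presented compactly.
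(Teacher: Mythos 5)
Your proposal is correct and follows essentially the same route as the paper: the upper bound by combining Theorem \ref{SzegoQuant} with the computed characteristic $\left[|\varphi'|^{1-\frac{p}{2}}\right]_{A_p(\partial\mathbb{D})}$, and the lower bound via the Grafakos-style testing argument built on Lemma \ref{ball separation}, the test function $f=\omega^{-1/(p-1)}\chi_{I_1}$, the two symmetric inequalities multiplied together, and the supremum over small arcs. Your explicit remark that restricting to arcs of length below the $r<\frac{2}{5}$ threshold loses only a bounded factor (since the blow-up of the characteristic is realized on small arcs near $\varphi^{-1}(\pm\ii)$) is a point the paper leaves implicit, but it is the same argument.
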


\begin{rem}
Notice that the lower bound blows up at the endpoints $p=\frac{6}{5}$ and $p=6$, giving a proof that the Szeg\H{o} projection on this domain is only bounded for $\frac{6}{5}<p<6.$
\end{rem}

\begin{open}
As the reader may have noted, the dependences on $p$ in the upper and lower bounds appearing in Theorem \ref{SzegoSpecificQuant} are slightly different. It would be quite  interesting to determine $\|\mathcal{S}\|_{L^p(\Omega) \rightarrow L^p(\Omega)}$ exactly or asymptotically in this specific case, or else come up with a different example to show that the upper bound in Theorem \ref{SzegoMain} is in fact sharp. 

\end{open}

\subsection{Weak-Type Estimates}

We close the discussion of $L^p$ behavior with a couple of open questions. In particular, one could ask about the weak-type endpoint behavior of the operators $\mathcal{S}$ and $\mathcal{B}$, even in the unweighted case, when the projections are bounded in a limited range.  For instance, it would be quite interesting to see if endpoint weak-type estimates hold for the domains in Corollaries \ref{SzegoGraph}, \ref{SzegoLip}, \ref{BergmanGraph}, and \ref{BergmanLip} (for example, does $\mathcal{S}$ satisfy a weak-type $(\frac{4}{3},\frac{4}{3})$ estimate if $\Omega$ is a local graph domain?). The direct approach via conformal mapping seems to fail here, so the problem requires novel ideas. It is possible, though perhaps more difficult, that provided one solves the unweighted problem, one could use similar machinery to obtain weighted weak-type estimates. 

In contrast, for certain classes of nicer domains which admit full range $L^p$ bounds, including convex domains and domains with sufficient smoothness (for example, Dini-smooth domains), weak-type estimates have already been established (see \cite[Theorem 2.1]{Bek1}). However it would also be interesting to investigate the case $p=1$ when $\Omega$ is non-smooth, but nevertheless the Bergman and Szeg\H{o} are bounded in the full reflexive range, such as the vanishing chord-arc domains. Recall that $\Omega$ has the vanishing chord-arc property if the ratio $\frac{\sigma(pq)}{|p-q|}$ for $p,q \in \partial \Omega$ tends to $1$ as $p$ approaches $q$, where $\sigma(pq)$ denotes the length of the shortest arc joining $p$ and $q$.
 \begin{open}
 Investigate unweighted/weighted weak-type estimates for the Bergman/Szeg\H{o} projections at the endpoints of the $L^p$ boundedness ranges for local graph domains and Lipschitz domains. 
 
 \end{open}
 
  \begin{open}
 Investigate unweighted/weighted weak-type estimates for the Bergman/Szeg\H{o} projections at the endpoint $p=1$ for vanishing chord-arc domains. 
 
 \end{open}

\section{Bergman vs. Szeg\H{o}}\label{B vs S}
It is of interest to compare the Bergman and Szeg\H{o} projections via the Poisson extension operator $\mathcal{P}$ (see, for example \cite{LK}). Assuming that the boundary of $\Omega$ is sufficiently smooth, recall that the Poisson extension of a continuous function $f$ on $\partial \Omega$ is given by the (harmonic in $\Omega$) function

$$ \mathcal{P}f(z)=\int_{\partial \Omega} P(z,w) f(w) \, d \sigma(w), z \in \Omega,$$
where $P(z,w)$ denotes the Poisson kernel for $\Omega.$ Notice that the two operators $\mathcal{B} \mathcal{P}$ and $\mathcal{P} \mathcal{S}$ produce holomorphic functions in the interior of $\Omega$ from boundary data, and if the domain $\Omega$ is sufficiently regular, both will be bounded maps from $L^p( \partial \Omega)$ to $L^p(\Omega).$  In fact, in the case that $\Omega$ has sufficently smooth boundary, it can be shown that both $\mathcal{B} \mathcal{P}$ and $\mathcal{P} \mathcal{S}$ are bounded operators from $L^2(\partial \Omega)$ to the Sobolev space $L^2_{1/2}(\partial \Omega)$ \cite{LK} (there is also an $L^p$ version of the Sobolev result, but we will not concern ourselves with it here). 

Motivated by this Sobolev improvement, one might wonder about the compactness of the operators $\mathcal{B} \mathcal{P}$ and $\mathcal{P}\mathcal{S}$ as maps from $L^p(\partial \Omega)$ to $L^p(\Omega)$ (this may follow from some version of Sobolev embedding, but the author has not checked the details). However, this can be seen directly with rather minimal effort. We consider the case when $\Omega$ is Dini smooth, which means there is a parametrization $r(t)$ of the boundary Jordan curve $C$ so that $r'(t)$ is Dini continuous. Recall a function $f:\mathbb{R} \rightarrow \mathbb{C}$ is Dini continuous if $$ \int_{0}^{1} \frac{\omega(f,\delta)}{\delta} \, d \delta<\infty; \quad \omega(f,\delta):= \sup_{|x-y|\leq \delta} |f(x)-f(y)|.$$
Notice that Dini continuity is slightly weaker than Lipschitz/H\"{o}lder continuity and all Dini smooth domans are $C^1$, though the converse does not hold. We will use the key fact that if $\Omega$ is Dini-smooth and $\varphi: \mathbb{D} \rightarrow \Omega$ is a conformal map, then  $\varphi'$ extends continuously to $\bar{\mathbb{D}}$ and is non-vanishing on $\bar{\mathbb{D}}$ (see \cite[Theorem 3.5]{P}). We have the following result for this category of domains:

\begin{prop}\label{IndivCompact}
Let $\Omega \subset \mathbb{C}$ be a Dini-smooth bounded domain. Then the operators $\mathcal{B} \mathcal{P}$ and $\mathcal{P} \mathcal{S}$ are compact maps from $L^p( \partial \Omega)$ to $L^p(\Omega)$ for $1 < p<\infty$.
\begin{proof}
 We first note that the Bergman and Szeg\H{o} projections are bounded on $L^p$ for $1<p<\infty$ in this situation by Theorems \ref{SzegoQuant} and \ref{BergmanQuant}, for instance. Therefore, it suffices to show that the Poisson extension operator $\mathcal{P}$ is compact from $L^p( \partial \Omega)$ to $L^p(\Omega)$.
 
 Indeed, this can be seen directly using the formula $\mathcal{P}=C_{\varphi^{-1}} \mathcal{P}_0 C_{\varphi}$ given in \cite[pp. 972]{LK}, where $C_{\varphi}$ denotes the composition operator with symbol $\varphi$ (which denotes the conformal map as before). This formula apriori holds for continuous functions on $\partial \Omega$, but it is easy to show it meaningfully extends to $L^p(\partial \Omega).$ To begin with, since $|\varphi'|$ is bounded above and below on $\bar{\mathbb{D}}$, the maps $C_{\varphi}$ and $C_{\varphi}^{-1}$ are bounded from $L^p(\partial \Omega)$ to $L^p(\partial \mathbb{D})$  and $L^p(\mathbb{D})$ to $L^p(\Omega)$, respectively. We have therefore reduced the problem to showing that $\mathcal{P}_0$ is compact from $L^p(\partial \mathbb{D})$ to $L^p(\mathbb{D})$ for $1 <p<\infty$ (actually, the argument will show that one can include the endpoint $p=1$ for this statement). 
 
 To this end, take $f_n$ converging weakly to zero in $L^p(\partial \mathbb{D}).$ To prove the operator $\mathcal{P}$ is compact, it suffices to show that for all $\varepsilon>0$, there exists an $N \in \mathbb{N}$ so that for all $n>N$, we have
\begin{equation} \|\mathcal{P} f_n \|_{L^p(\mathbb{D})}^p< \varepsilon.      \label{smallLp} \end{equation}
 
 Since the $f_n$ converge weakly, we know that there exists a $B>0$ such $\sup_{n} \|f_n\|_{L^p(\partial \mathbb{D})}\leq B.$ We first claim that $\mathcal{P}f_n$ converges to $0$ pointwise on $\mathbb{D}.$ To see this, write
 \begin{equation}
 |\mathcal{P} f_n(re^{\ii \theta})| = \left| \frac{1}{2 \pi} \int_{0}^{2 \pi} \frac{(1-r^2) f_n(e^{\ii t})}{|e^{\ii t}-r e^{\ii \theta}|^2}  \, dt \right|. \label{PoissonForm}
\end{equation} 
For fixed $r, \theta$, the function $g_{r, \theta}(e^{\ii t})=\frac{(1-r^2) }{|e^{\ii t}-r e^{\ii \theta}|^2} $ clearly belongs to $L^{p'}(\partial \mathbb{D})$ (in fact, it belongs to $L^{\infty}(\partial \mathbb{D})$),  so then by weak convergence it is clear that \eqref{PoissonForm} converges to $0$ as $n \rightarrow \infty.$ We have thus established pointwise convergence.  

We now claim that for all $\varepsilon>0$ we can choose $r_0$ sufficiently close to $1$ so that

\begin{equation} 
\sup_{n} \frac{1}{\pi} \int_{0}^{2\pi} \int_{r_0}^1 |\mathcal{P}f_n(re^{\ii \theta})|^p r \, dr \, d\theta< \frac{\varepsilon}{2}. \label{ClaimMaximal}
\end{equation} 

Assuming claim \eqref{ClaimMaximal}, we may write, for all $n \in \mathbb{N}$:

\begin{align*}
\|\mathcal{P} f_n \|_{L^p(\mathbb{D})}^p & = \frac{1}{\pi} \int_{0}^{2\pi} \int_{0}^1 |\mathcal{P}f_n(re^{\ii \theta})|^p r \, dr \, d\theta \\
& =  \frac{1}{\pi} \int_{0}^{2\pi} \int_{0}^{r_0} |\mathcal{P}f_n(re^{\ii \theta})|^p r \, dr \, d\theta + \frac{1}{ \pi} \int_{0}^{2\pi} \int_{r_0}^{1} |\mathcal{P}f_n(re^{\ii \theta})|^p r \, dr \, d\theta \\
& < \frac{1}{ \pi} \int_{0}^{2\pi} \int_{0}^{r_0} |\mathcal{P}f_n(re^{\ii \theta})|^p r \, dr \, d\theta + \frac{\varepsilon}{2}.
\end{align*}
Notice that if $0<r<r_0$, we have the uniform estimate
$$ |\mathcal{P}f_n(re^{\ii \theta})|^p r \leq \frac{r_0}{(1-r_0)^{2p}} \left|\frac{1}{2 \pi} \int_{0}^{2 \pi} f_n(e^{\ii t}) \, dt \right|^p \leq \frac{r_0 B^p}{(1-r_0)^{2p}}.  $$
Now, using for example the Dominated Convergence Theorem with constant dominating function $F(r,\theta)=\frac{r_0 B^p}{(1-r_0)^{2p}} $ we may choose $N$ sufficiently large so that for all $n>N$, we have
$$ \frac{1}{ \pi} \int_{0}^{2\pi} \int_{0}^{r_0} |\mathcal{P}f_n(re^{\ii \theta})|^p r \, dr \, d\theta< \frac{\varepsilon}{2}.$$
Assuming claim \eqref{ClaimMaximal}, we have established \eqref{smallLp}.

It remains to prove \eqref{ClaimMaximal}. This is a straightforward consequence of Minkowski's inequality, which yields

$$\sup_{0<r<1} \frac{1}{2 \pi} \int_{0}^{2\pi} |\mathcal{P}f_n(re^{\ii \theta})|^p   \, d\theta \leq \|f_n\|_{L^p(\partial \mathbb{D})}^p \leq B^p.$$
(see, for example, \cite[pp. 14]{Gar}). 

\end{proof}
\end{prop}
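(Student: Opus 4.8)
The plan is to reduce the compactness of both $\mathcal{B}\mathcal{P}$ and $\mathcal{P}\mathcal{S}$ to the compactness of a single operator, namely the Poisson extension $\mathcal{P}\colon L^p(\partial\Omega)\to L^p(\Omega)$. Since $\Omega$ is Dini-smooth, the full-range $L^p$ boundedness of $\mathcal{B}$ on $L^p(\Omega)$ and $\mathcal{S}$ on $L^p(\partial\Omega)$ for $1<p<\infty$ follows from Theorems \ref{SzegoQuant} and \ref{BergmanQuant} (one checks $|\varphi'|^{1-p/2}$ and $|\varphi'|^{2-p}$ are in the relevant weight classes, which is immediate here because $\varphi'$ extends continuously and is non-vanishing on $\bar{\mathbb{D}}$, so these weights are bounded above and below). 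Because a bounded operator composed with a compact operator is compact, once $\mathcal{P}$ is shown to be compact it follows that $\mathcal{B}\mathcal{P}=\mathcal{B}\circ\mathcal{P}$ and $\mathcal{P}\mathcal{S}=\mathcal{P}\circ\mathcal{S}$ are both compact.

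Next I would transfer the problem to the disc using the factorization $\mathcal{P}=C_{\varphi^{-1}}\mathcal{P}_0 C_{\varphi}$ from \cite{LK}, where $C_\varphi$ denotes composition with the conformal map $\varphi$. The Dini-smoothness is exactly what guarantees that $|\varphi'|$ is bounded above and below on $\bar{\mathbb{D}}$, so both composition operators $C_\varphi$ and $C_{\varphi^{-1}}$ are bounded (between the appropriate $L^p$ spaces), and compactness of $\mathcal{P}$ is equivalent to compactness of $\mathcal{P}_0\colon L^p(\partial\mathbb{D})\to L^p(\mathbb{D})$. I would verify that the factorization, which is stated for continuous boundary data in \cite{LK}, extends meaningfully to all of $L^p(\partial\Omega)$ by the boundedness just noted.

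To prove $\mathcal{P}_0$ is compact I would use the sequential characterization: take $f_n\to 0$ weakly in $L^p(\partial\mathbb{D})$, with $\sup_n\|f_n\|_{L^p(\partial\mathbb{D})}\le B$, and show $\|\mathcal{P}_0 f_n\|_{L^p(\mathbb{D})}^p\to 0$. The key decomposition is to split the integral over $\mathbb{D}$ into the interior region $\{r<r_0\}$ and a collar $\{r_0<r<1\}$. For the collar, Minkowski's integral inequality gives the uniform radial bound $\sup_{0<r<1}\frac{1}{2\pi}\int_0^{2\pi}|\mathcal{P}_0 f_n(re^{\ii\theta})|^p\,d\theta\le B^p$, so choosing $r_0$ close to $1$ makes the collar contribution smaller than $\varepsilon/2$ uniformly in $n$. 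On the interior region, for each fixed $(r,\theta)$ with $r<r_0$ the Poisson kernel $g_{r,\theta}$ lies in $L^{p'}(\partial\mathbb{D})$, so weak convergence forces $\mathcal{P}_0 f_n(re^{\ii\theta})\to 0$ pointwise; combined with the uniform bound $|\mathcal{P}_0 f_n(re^{\ii\theta})|^p r\le r_0 B^p/(1-r_0)^{2p}$, dominated convergence gives an interior contribution below $\varepsilon/2$ for $n$ large.

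The main obstacle is the collar region near the boundary, where pointwise convergence alone is worthless and $\mathcal{P}_0$ fails to be compact from the Hardy-type perspective if one tried to pass to boundary traces. The crucial point is that the \emph{uniform} (in both $n$ and $r$) radial $L^p$ bound from Minkowski's inequality lets one discard this region before invoking any convergence, so that convergence is only used on a compact subset of $\mathbb{D}$ where the kernel is well-behaved. I would note in passing that the argument in fact goes through at the endpoint $p=1$ for $\mathcal{P}_0$, though the statement is only claimed for $1<p<\infty$ since that is the range in which $\mathcal{B}$ and $\mathcal{S}$ are available.
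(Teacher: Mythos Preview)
Your proposal is correct and follows essentially the same approach as the paper: reduce to compactness of $\mathcal{P}$ via boundedness of $\mathcal{B}$ and $\mathcal{S}$, transfer to $\mathcal{P}_0$ on the disc using the factorization $\mathcal{P}=C_{\varphi^{-1}}\mathcal{P}_0 C_{\varphi}$ and the Dini-smooth bound on $|\varphi'|$, then split $\mathbb{D}$ into an interior disc (handled by pointwise convergence plus dominated convergence) and a collar near the boundary (handled uniformly via Minkowski's inequality). The order of presentation and every key estimate match the paper's proof.
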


\begin{remark}
The author does not know of a reference for this fact concerning the compactness of the Poisson extension on the disk, but it is probably well known to the experts. The result appears to be mentioned in passing in \cite{Englis} in the context of a bounded domain $\Omega \subset \mathbb{R}^n$ with $C^\infty$ smooth boundary and for $p=2$. 

\end{remark}

\begin{remark}
The reason for the Dini-smooth hypothesis for this result is that this guarantees the boundedness of the Bergman and Szeg\H{o} projections for $1<p<\infty$, and also the composition maps $C_{\varphi}$ and $C_{\varphi^{-1}}$, as $\varphi'$ is bounded above and below on $\bar{\mathbb{D}}.$ However, it is very plausible that this result extends to domains of class $C^1$. In particular, it is known that $\mathcal{B}$ and $\mathcal{S}$ are bounded on $L^p$ for $1<p<\infty$ for $C^1$ domains (see \cite{LS1}), and in fact as we mentioned this can be weakened to vanishing chord-arc domains. One would have to check the compactness of $\mathcal{P}$ in this situation, which is not immediate as $C_\varphi$ and $C_{\varphi^{-1}}$ may not be bounded maps. 

\end{remark}

A more interesting question, perhaps, is to study how the boundary regularity improves the mapping properties of the difference operator $\mathcal{B} \mathcal{P}- \mathcal{P} \mathcal{S}.$ For domains with some degree of smoothness, one would expect the difference operator $\mathcal{B} \mathcal{P}-\mathcal{P}\mathcal{S}$ to be better behaved than each of the individual operators in an appropriate sense due to cancellation. Indeed, in the special case of the unit disk $\mathbb{D}$, one actually shows that $\mathcal{B}_0 \mathcal{P}_0=\mathcal{P}_0 \mathcal{S}_0$, but clearly this complete cancellation cannot be expected to hold for general domains. 

Given a positive integer $k$ and $0<\alpha < 1$, we say a domain $\Omega$ has $C^{k, \alpha}$ boundary if its boundary can be locally represented as the graph of a $C^{k, \alpha}$ function. It is known that if $\varphi$ is a conformal map associated to a domain $\Omega$ with $C^{k,\alpha}$ boundary, then $\varphi \in C^{k,\alpha}(\bar{\mathbb{D}})$ and $\varphi$ is nonvanishing on $\bar{\mathbb{D}}.$ The authors in \cite{LK} analyzed the mapping properties of the difference operator for $C^{k,\alpha}$ domains with $k \geq 2$ in terms of a gain in the Sobolev or Lipschitz space scale. In particular, they proved that the difference operator $\mathcal{B}\mathcal{P}-\mathcal{P} \mathcal{S}$ maps $L^2(\partial \Omega)$ to the Sobolev space $L^2_{3/2}(\Omega)$, representing a gain of a full derivative in the Sobolev scale compared to the operators individually. The authors also showed that $\mathcal{B} \mathcal{P}-\mathcal{P}\mathcal{S}$ is a bounded map from $L^\infty(\partial \Omega)$ to $\Lambda_1(\Omega)$, where $\Lambda_1(\Omega)$ is a H\"{o}lder-Zygmund space \cite[Theorem 1.2]{LK}. We will not concern ourselves with the precise definition of $\Lambda_1(\Omega)$ here because this result is not our focus.  
 
 Motivated by Proposition \ref{IndivCompact}, we might wonder about the compactness of the individual operators $\mathcal{B} \mathcal{P}$ and $\mathcal{P} \mathcal{S}$ at the endpoints $p=1$ and $p =\infty.$ However, in contrast to the other values of $p$, it is clear that the operator $\mathcal{P}_0$ is not compact from $L^\infty(\partial \mathbb{D})$ to $L^\infty(\mathbb{D})$ (it is in fact an isometry), and the projections $\mathcal{B}$ and $\mathcal{S}$ are in general not bounded at either of the endpoints. Thus, the compactness (even the boundedness) of $\mathcal{B} \mathcal{P}$ and $\mathcal{P}\mathcal{S}$ cannot be expected to hold at the endpoints.
 
In a similar spirit as \cite{LK}, we can apply a characterization of compact integral operators on $L^1$ and $L^\infty$ given in \cite{E} in a simple way to show that the difference operator $\mathcal{B}\mathcal{P}-\mathcal{P} \mathcal{S}$ has the improved property of compactness on $L^1$ and $L^\infty$ rather than just $L^p$, if we assume the boundary is just slightly more regular. We remark that our assumptions on regularity are less than those in \cite{LK}; this is heuristically because we are proving compactness rather than quantitative Sobolev results.

\begin{theorem}\label{CompactDiffInf} Suppose $\Omega$ has $C^{1,\alpha}$ boundary and $0<\alpha < 1.$ Then $\mathcal{B} \mathcal{P}- \mathcal{P} \mathcal{S}$ is a compact
operator from $L^1(\partial \Omega)$ to $L^1(\Omega)$ and $L^\infty(\partial \Omega)$ to $L^\infty(\Omega).$

\begin{proof}  
First, using the formula given in \cite[pp. 974]{LK}, we may write

$$\mathcal{B}\mathcal{P}-\mathcal{P}\mathcal{S}= \tau^{-1} \circ \left( [\mathcal{B}_0,\mathcal{M}_{\varphi'}]\mathcal{P}_0 \mathcal{M}_{(\varphi ')^{-1/2}}+ \mathcal{M}_{\varphi'}\mathcal{P}_0[\mathcal{S}_0,\mathcal{M}_{(\varphi')^{-1/2}}]\right) \circ \tau_{1/2},$$
where the notation $[A,B]$ means the commutator $AB-BA$ and $\mathcal{M}_f$ denotes the multiplication operator with symbol $f.$

Notice that, on account of the fact that $\Omega$ is $C^{1,\alpha}$, the function $\varphi'$ is bounded above and below on $\bar{\mathbb{D}}.$ It is then clear from Lemmas \ref{transfer to disc Szego} and \ref{TransferDiscBergman} that $\tau_{1/2}$ is a linear homeomorphism from $L^p(\partial \Omega)$ to $L^p(\partial \mathbb{D})$, and similarly $\tau^{-1}$ is a linear homeomorphism from $L^p(\mathbb{D})$ to $L^p(\Omega).$ Moreover, the operators $\mathcal{M}_{\varphi'}$ and $\mathcal{M}_{(\varphi')^{-1/2}}$ are evidently bounded on $L^p(\mathbb{D})$ and $L^p(\partial \mathbb{D})$ respectively. We also know the Poisson extension $\mathcal{P}_0$ is a bounded operator from $L^p(\partial \mathbb{D})$ to $L^p(\mathbb{D})$ for $1\leq p \leq \infty$.

Therefore, it clearly suffices to prove the compactness of the two commutators $[\mathcal{B}_0,\mathcal{M}_{\varphi'}]$ and $[\mathcal{S}_0,\mathcal{M}_{(\varphi')^{-1/2}}].$ We only prove the compactness of $[\mathcal{B}_0,\mathcal{M}_{\varphi'}]$, as the other commutator is similar. Moreover, we only prove compactness for $p=\infty$; the result for $p=1$ will follow using the same argument.

Let $k_z(w)=K(z,w)=\frac{(\varphi'(z)-\varphi'(w))}{(1-\overline{w}z)^2}$ denote the kernel of the operator $[\mathcal{B}_0,\mathcal{M}_{\varphi'}]$. It is easy to check that $K(z,w)$ is uniformly continuous on the set $\{(z,w) \in \bar{\mathbb{D}} \times \bar{\mathbb{D}}: |z-w| \geq r\}$ for each $r>0.$  We have, using our hypothesis that $\Omega$ is $C^{1,\alpha}$:

\begin{eqnarray*}
|k_z(w)| & = & \frac{|\varphi'(z)-\varphi'(w)|}{|1-\overline{w}z|^2}\\
& \leq &  C_{\varphi} \frac{|z-w|^\alpha}{|1-\overline{w}z|^2}\\
& \leq & \frac{C_\varphi}{|z-w|^{2- \alpha}}.
\end{eqnarray*}
It is thus obvious
$$\sup_{z \in \mathbb{D}} \int_{\mathbb{D}}|k_z(w)| \mathop{dA(w)}<\infty.$$

Moreover, an easy computation shows

\begin{equation} \int_{D_\delta(z)} \frac{1}{|z-w|^{2-\alpha}} \mathop{d A(w)} = \frac{2}{\alpha} \delta^\alpha. \label{integratekernel} \end{equation}

By a characterization given in \cite[Corollary 5.1]{E}, it is sufficient to show that the set of functions $\{k_z\}_{z \in \mathbb{D}}$ is relatively compact in $L^1(\mathbb{D})$. This can be checked using a version of the Riesz-Kolmogorov theorem given in \cite[Lemma 1]{IK}: 
$$\lim_{r \rightarrow 0} \sup_{z \in \mathbb{D}} \int_{\mathbb{D}}\left|k_z(w)-\fint_{D_r(w)}k_z \, dA \right| \mathop{dA(w)}=0.$$

Fix an arbitrary $\varepsilon>0$. Let $\delta^\alpha=\varepsilon$ and let $r<\delta$. We will show that we can make a sufficiently small choice of $r$ so

$$\sup_{z \in \mathbb{D}} \int_{\mathbb{D}} \left|k_z(w)-\fint_{D_r(w)}k_z \, dA\right| \mathop{dA(w)} \lesssim \delta^\alpha.$$

First, we show 

$$\sup_{z \in \mathbb{D}} \int_{D_\delta(z)} \left|k_z(w)-\fint_{D_r(w)}k_z \, dA\right| \mathop{d A(w)} \lesssim \delta^\alpha.$$
To see this, note that by \eqref{integratekernel} and the triangle inequality it is clearly enough to show 

$$ \sup_{z \in \mathbb{D}} \int_{D_\delta(z)} \left|\fint_{D_r(w)}k_z \, dA\right| \mathop{d A(w)} \lesssim \delta^\alpha                           .$$
For fixed $z \in \mathbb{D}$, let $G_z=\{w:|z-w| \geq 2r\}.$ Note that if $w \in G_z$ and $\zeta \in D_r(w),$ then $|z-w| \leq 2 |z-\zeta|.$ We further split the region of integration and estimate for fixed $z$:

\begin{align*}
 \int_{D_\delta(z) \cap G_z} \left|\fint_{D_r(w)}k_z \, dA \right| \mathop{dA(w)} & \leq \int_{D_\delta(z) \cap G_z} \left(\frac{1}{|D_r(w)|}\int_{D_r(w)} |k_z(\zeta)| \, dA(\zeta)\right) \, dA(w) \\
 & \leq 2^{2-\alpha} C_\varphi \int_{D_\delta(z) \cap G_z} \left(\frac{1}{|D_r(w)|}\int_{D_r(w)} \frac{1}{|z-w|^{2-\alpha}} \, dA(\zeta)\right) \, dA(w)\\
 & \leq 2^{2-\alpha} C_\varphi \int_{D_\delta(z)} \frac{1}{|z-w|^{2-\alpha}} \, dA(w)\\
 & \leq \frac{8}{\alpha} C_\varphi \delta^\alpha .
\end{align*}
On the other hand, 

\begin{align*}
 \int_{D_\delta(z) \cap G_z^c} \left|\fint_{D_r(w)}k_z \, dA\right| \mathop{dA(w)} & \leq \int_{D_{2r}(z)} \left(\frac{1}{|D_r(w)|}\int_{D_{3r}(z)} |k_z(\zeta)| \, dA(\zeta)\right) \, dA(w)\\
 & \leq \frac{24}{\alpha} C_\varphi r^\alpha\\
 & \lesssim \delta^\alpha,
\end{align*}
establishing the first claim.

For the other region of integration, by uniform continuity we can choose $r$ small enough so 

$$\sup_{z \in \mathbb{D}} \int_{\mathbb{D} \setminus B(z, \delta)} \left|k_z(w)-\fint_{D_r(w)}k_z \, dA\right| \mathop{dA(w)}< \delta^\alpha,$$
establishing the result.

\end{proof}
\end{theorem}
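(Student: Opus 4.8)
The plan is to reduce the problem to a pair of commutator compactness statements on the disk, exactly as the formula from \cite{LK} suggests, and then handle each commutator by a direct kernel estimate. First I would invoke the identity
$$\mathcal{B}\mathcal{P}-\mathcal{P}\mathcal{S}= \tau^{-1} \circ \left( [\mathcal{B}_0,\mathcal{M}_{\varphi'}]\mathcal{P}_0 \mathcal{M}_{(\varphi ')^{-1/2}}+ \mathcal{M}_{\varphi'}\mathcal{P}_0[\mathcal{S}_0,\mathcal{M}_{(\varphi')^{-1/2}}]\right) \circ \tau_{1/2}$$
and observe that, since $C^{1,\alpha}$ regularity of $\Omega$ forces $\varphi'$ to extend continuously to $\bar{\mathbb{D}}$ and be bounded above and below there, all of the ``outer'' factors ($\tau^{-1}$, $\tau_{1/2}$, $\mathcal{M}_{\varphi'}$, $\mathcal{M}_{(\varphi')^{-1/2}}$, and $\mathcal{P}_0$) are bounded on the relevant $L^1$ and $L^\infty$ spaces. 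Because compact operators form a two-sided ideal, it then suffices to prove that $[\mathcal{B}_0,\mathcal{M}_{\varphi'}]$ and $[\mathcal{S}_0,\mathcal{M}_{(\varphi')^{-1/2}}]$ are compact on $L^1$ and $L^\infty$; I would treat only the Bergman commutator in detail since the Szeg\H{o} one is entirely analogous (its kernel has the same $|z-w|^{-(1-\alpha)}$-type singularity on the circle).

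Next I would write down the kernel $k_z(w)=\dfrac{\varphi'(z)-\varphi'(w)}{(1-\overline{w}z)^2}$ of $[\mathcal{B}_0,\mathcal{M}_{\varphi'}]$ and record two facts: (i) the $C^\alpha$ bound $|\varphi'(z)-\varphi'(w)| \le C_\varphi |z-w|^\alpha$ (which follows from $\varphi \in C^{1,\alpha}(\bar{\mathbb{D}})$), giving $|k_z(w)| \lesssim |z-w|^{-(2-\alpha)}$, so the kernel is a genuinely integrable, uniformly-in-$z$ controlled kernel; and (ii) that $k_z(w)$ is jointly uniformly continuous on $\{|z-w|\ge r\}$ for each $r>0$, since $\varphi'$ is uniformly continuous on $\bar{\mathbb{D}}$ and the denominator stays away from zero there. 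To get compactness on $L^1$ and $L^\infty$ from this I would appeal to the characterization in \cite[Corollary 5.1]{E}: compactness of an integral operator with kernel $k_z(w)$ on $L^\infty$ (resp.\ $L^1$) is equivalent to relative compactness of $\{k_z\}_{z\in\mathbb{D}}$ in $L^1(\mathbb{D})$ (resp.\ of $\{w\mapsto k_z(w)\}$ in the appropriate sense). Then I would verify relative compactness of $\{k_z\}$ in $L^1(\mathbb{D})$ via the Riesz–Kolmogorov/Fréchet criterion in the form of \cite[Lemma 1]{IK}, namely $\lim_{r\to 0}\sup_{z}\int_{\mathbb{D}}\bigl|k_z(w)-\fint_{D_r(w)}k_z\,dA\bigr|\,dA(w)=0$.

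To verify that limit I would fix $\varepsilon>0$, set $\delta^\alpha=\varepsilon$, and split the $w$-integral into $D_\delta(z)$ and its complement. On $D_\delta(z)$ I would bound $\int_{D_\delta(z)}|k_z(w)|\,dA(w)$ and $\int_{D_\delta(z)}|\fint_{D_r(w)}k_z\,dA|\,dA(w)$ separately; the key computation is $\int_{D_\delta(z)}|z-w|^{-(2-\alpha)}\,dA(w)=\frac{2}{\alpha}\delta^\alpha$, and for the averaged term I would further split according to whether $|z-w|\ge 2r$ (where $|z-\zeta|\approx|z-w|$ for $\zeta\in D_r(w)$, so the bound $\lesssim\delta^\alpha$ is inherited) or $|z-w|<2r$ (a region of area $\lesssim r^2$, where a crude bound gives $\lesssim r^\alpha\lesssim\delta^\alpha$). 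On $\mathbb{D}\setminus D_\delta(z)$ the singularity is absent and uniform continuity of $k_z$ away from the diagonal lets me choose $r$ small enough that the averaging perturbs $k_z$ by less than $\delta^\alpha$ uniformly in $z$. Combining the pieces gives the Riesz–Kolmogorov limit, hence compactness of the commutator, hence of $\mathcal{B}\mathcal{P}-\mathcal{P}\mathcal{S}$.

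I expect the main obstacle to be purely bookkeeping: making the ``complement of $D_{2r}$ versus inside $D_{2r}$'' split for the averaged term clean and uniform in $z$, and being careful that the constants in the $C^\alpha$ estimate and in \eqref{integratekernel} are genuinely $z$-independent. There is no conceptual difficulty — the regularity $C^{1,\alpha}$ with $\alpha>0$ is exactly what is needed so that the commutator kernel, while still singular, is $L^1$-integrable with a quantitative modulus, which is what both \cite[Corollary 5.1]{E} and \cite[Lemma 1]{IK} consume; the Szeg\H{o} commutator requires the same estimates transplanted to the arc-length setting on $\partial\mathbb{D}$ with the size/smoothness of the Szeg\H{o} kernel from \eqref{SzegoSmoothness}, which were already recorded earlier in the paper.
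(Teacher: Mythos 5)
Your proposal is correct and follows essentially the same route as the paper's own proof: the same conjugation formula from \cite{LK}, the same reduction via the ideal property to the two commutators, the same H\"older kernel bound $|k_z(w)|\lesssim |z-w|^{-(2-\alpha)}$, and the same combination of \cite[Corollary 5.1]{E} with the Riesz--Kolmogorov criterion of \cite[Lemma 1]{IK}, verified by the identical splitting into $D_\delta(z)$ (with the $|z-w|\gtrless 2r$ subcases) and its complement. The only addition beyond the paper is your explicit remark that the Szeg\H{o} commutator kernel has a $|z-w|^{-(1-\alpha)}$ singularity, which is consistent with the paper's ``the other commutator is similar.''
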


The proof above makes use of fairly strong hypotheses on the boundary smoothness of $\Omega.$ It is natural to ask whether these hypothesis can be weakened, especially since $\mathcal{B}$ and $\mathcal{S}$ are both bounded operators on $L^p$ for all $p$ on domains that are less regular. 

\begin{open}
Determine if there is a class larger than the $C^{k, \alpha}$ domains $(k \geq 1)$) so that $\mathcal{B} \mathcal{P}-\mathcal{P}\mathcal{S}$ is compact from $L^p(\partial \Omega)$ to $L^p(\Omega)$ for $1 \leq p\leq \infty.$ In particular, does the result hold $\Omega$ is Dini-smooth, or even better when $\Omega$ is class $C^1$ (or less smooth)?

\end{open}

\bibliographystyle{amsplain}

\begin{bibdiv}
\begin{biblist}
\bib{AM}{article}{
   author={Auscher, Pascal},
   author={Martell, Jos\'{e} Mar\'{\i}a},
   title={Weighted norm inequalities, off-diagonal estimates and elliptic
   operators. I. General operator theory and weights},
   journal={Adv. Math.},
   volume={212},
   date={2007},
   number={1},
   pages={225--276},
} 

\bib{APR}{article}{
   author={Aleman, Alexandru},
   author={Pott, Sandra},
   author={Reguera, Mar\'{\i}a Carmen},
   title={Characterizations of a limiting class $B_\infty$ of B\'{e}koll\'{e}-Bonami
   weights},
   journal={Rev. Mat. Iberoam.},
   volume={35},
   date={2019},
   number={6},
   pages={1677--1692}
}

\bib{Ax1986}{article}{
   author={Axler, Sheldon},
   title={The Bergman space, the Bloch space, and commutators of
   multiplication operators},
   journal={Duke Math. J.},
   volume={53},
   date={1986},
   number={2},
   pages={315--332}
}

\bib{AV}{article}{
   author={Anderson, Theresa C.},
   author={Vagharshakyan, Armen},
   title={A simple proof of the sharp weighted estimate for Calder\'{o}n-Zygmund
   operators on homogeneous spaces},
   journal={J. Geom. Anal.},
   volume={24},
   date={2014},
   number={3},
   pages={1276--1297},
}

\bib{Bek1}{article}{
   author={Bekoll\'{e}, David},
   title={In\'{e}galit\'{e} \`a poids pour le projecteur de Bergman dans la boule
   unit\'{e} de ${\bf C}^{n}$},
   language={French},
   journal={Studia Math.},
   volume={71},
   date={1981/82},
   number={3},
   pages={305--323},
}

\bib{Bek2}{article}{
   author={B\'{e}koll\'{e}, David},
   title={Projections sur des espaces de fonctions holomorphes dans des
   domaines plans},
   language={French},
   journal={Canad. J. Math.},
   volume={38},
   date={1986},
   number={1},
   pages={127--157},
}

\bib{BFP}{article}{
   author={Bernicot, Fr\'{e}d\'{e}ric},
   author={Frey, Dorothee},
   author={Petermichl, Stefanie},
   title={Sharp weighted norm estimates beyond Calder\'{o}n-Zygmund theory},
   journal={Anal. PDE},
   volume={9},
   date={2016},
   number={5},
   pages={1079--1113},
   
}

\bib{CMP}{book}{
   author={Cruz-Uribe, David V.},
   author={Martell, Jos\'{e} Maria},
   author={P\'{e}rez, Carlos},
   title={Weights, extrapolation and the theory of Rubio de Francia},
   series={Operator Theory: Advances and Applications},
   volume={215},
   publisher={Birkh\"{a}user/Springer Basel AG, Basel},
   date={2011},
   pages={xiv+280},
}

\bib{Deng}{article}{
   author={Deng, Yaohua},
   author={Huang, Li},
   author={Zhao, Tao},
   author={Zheng, Dechao},
   title={Bergman projection and Bergman spaces},
   journal={J. Operator Theory},
   volume={46},
   date={2001},
   number={1},
   pages={3--24},
}

\bib{DGP}{article}{
   author={Dragi\v{c}evi\'{c}, Oliver},
   author={Grafakos, Loukas},
   author={Pereyra, Mar\'{\i}a Cristina},
   author={Petermichl, Stefanie},
   title={Extrapolation and sharp norm estimates for classical operators on
   weighted Lebesgue spaces},
   journal={Publ. Mat.},
   volume={49},
   date={2005},
   number={1},
   pages={73--91},
}

\bib{Englis}{article}{
   author={Engli\v{s}, Miroslav},
   title={Boundary singularity of Poisson and harmonic Bergman kernels},
   journal={J. Math. Anal. Appl.},
   volume={429},
   date={2015},
   number={1},
   pages={233--272},
}

\bib{E}{article}{
   author={Eveson, S. P.},
   title={Compactness criteria for integral operators in $L^\infty$ and
   $L^1$ spaces},
   journal={Proc. Amer. Math. Soc.},
   volume={123},
   date={1995},
   number={12},
   pages={3709--3716},
}

\bib{Gar}{book}{
   author={Garnett, John B.},
   title={Bounded analytic functions},
   series={Pure and Applied Mathematics},
   volume={96},
   publisher={Academic Press, Inc. [Harcourt Brace Jovanovich, Publishers],
   New York-London},
   date={1981},
   pages={xvi+467},
}

\bib{G}{book}{
   author={Grafakos, Loukas},
   title={Classical Fourier analysis},
   series={Graduate Texts in Mathematics},
   volume={249},
   edition={3},
   publisher={Springer, New York},
   date={2014},
   pages={xviii+638},
}

\bib{HWW}{article}{
   author={Huo, Zhenghui},
   author={Wagner, Nathan A.},
   author={Wick, Brett D.},
   title={Bekoll\'{e}-Bonami estimates on some pseudoconvex domains},
   journal={Bull. Sci. Math.},
   volume={170},
   date={2021},
   pages={Paper No. 102993, 36},
}

\bib{HK}{article}{
   author={Hyt\"{o}nen, Tuomas},
   author={Kairema, Anna},
   title={Systems of dyadic cubes in a doubling metric space},
   journal={Colloq. Math.},
   volume={126},
   date={2012},
   number={1},
   pages={1--33},
}

\bib{IK}{article}{
   author={Ivanishko, I. A.},
   author={Krotov, V. G.},
   title={Compactness of embeddings of Sobolev type on metric measure
   spaces},
   language={Russian, with Russian summary},
   journal={Mat. Zametki},
   volume={86},
   date={2009},
   number={6},
   pages={829--844}
}

\bib{LK}{article}{
   author={Koenig, Kenneth D.},
   author={Lanzani, Loredana},
   title={Bergman versus Szeg\H{o} via conformal mapping},
   journal={Indiana Univ. Math. J.},
   volume={58},
   date={2009},
   number={2},
   pages={969--997}
}

\bib{LS1}{article}{
   author={Lanzani, Loredana},
   author={Stein, Elias M.},
   title={Szeg\"{o} and Bergman projections on non-smooth planar domains},
   journal={J. Geom. Anal.},
   volume={14},
   date={2004},
   number={1},
   pages={63--86}
}

\bib{KL}{article}{
   author={Krantz, Steven G.},
   author={Li, Song-Ying},
   title={Boundedness and compactness of integral operators on spaces of
   homogeneous type and applications. II},
   journal={J. Math. Anal. Appl.},
   volume={258},
   date={2001},
   number={2},
   pages={642--657},
} 

\bib{Nik}{book}{
   author={Nikolski, Nikolai K.},
   title={Operators, functions, and systems: an easy reading. Vol. 1},
   series={Mathematical Surveys and Monographs},
   volume={92},
   note={Hardy, Hankel, and Toeplitz;
   Translated from the French by Andreas Hartmann},
   publisher={American Mathematical Society, Providence, RI},
   date={2002},
   pages={xiv+461},
   isbn={0-8218-1083-9},
}

\bib{P}{book}{
   author={Pommerenke, Ch.},
   title={Boundary behaviour of conformal maps},
   series={Grundlehren der mathematischen Wissenschaften [Fundamental
   Principles of Mathematical Sciences]},
   volume={299},
   publisher={Springer-Verlag, Berlin},
   date={1992},
   pages={x+300},
   isbn={3-540-54751-7},
}

\bib{PR}{article}{
   author={Pott, Sandra},
   author={Reguera, Maria Carmen},
   title={Sharp B\'{e}koll\'{e} estimates for the Bergman projection},
   journal={J. Funct. Anal.},
   volume={265},
   date={2013},
   number={12},
   pages={3233--3244},
}
	
\bib{RTW}{article}{
   author={Rahm, Rob},
   author={Tchoundja, Edgar},
   author={Wick, Brett D.},
   title={Weighted estimates for the Berezin transform and Bergman
   projection on the unit ball},
   journal={Math. Z.},
   volume={286},
   date={2017},
   number={3-4},
   pages={1465--1478}
}

\bib{S}{article}{
   author={Solov\'{e}v, A. A.},
   title={Estimates in $L^{p}$ of the integral operators that are
   connected with spaces of analytic and harmonic functions},
   language={Russian},
   journal={Dokl. Akad. Nauk SSSR},
   volume={240},
   date={1978},
   number={6},
   pages={1301--1304},
}

\bib{S2}{article}{
   author={Solovl\'{e}v, A. A.},
   title={Continuity of the harmonic projection in $L^{p}$-spaces},
   language={Russian, with English summary},
   note={Investigations on linear operators and the theory of functions,
   XII},
   journal={Zap. Nauchn. Sem. Leningrad. Otdel. Mat. Inst. Steklov. (LOMI)},
   volume={126},
   date={1983},
   pages={191--195},
}

\bib{Z1992}{article}{
   author={Zhu, Ke He},
   title={BMO and Hankel operators on Bergman spaces},
   journal={Pacific J. Math.},
   volume={155},
   date={1992},
   number={2},
   pages={377--395}
}

\end{biblist}
\end{bibdiv}

\end{document}